\theoremstyle{plain}
\newtheorem{theorem}{Theorem}[section]
\newtheorem{lemma}[theorem]{Lemma}
\newtheorem{proposition}[theorem]{Proposition}
\newtheorem{corollary}[theorem]{Corollary}
\theoremstyle{definition}
\newtheorem{remark}[theorem]{Remark}
\numberwithin{equation}{section}
\DeclareMathOperator{\st}{St}
\DeclareMathOperator{\Sym}{Sym}
\DeclareMathOperator{\Aut}{Aut}
\renewcommand{\epsilon}{\varepsilon}
\title[Maximal subgroups of non-torsion GGS-groups]{Maximal
  subgroups of non-torsion Grigorchuk-Gupta-Sidki groups}
\author[D. Francoeur]{Dominik Francoeur} \address{Dominik Francoeur: Instituto de Ciencias Matem\'{a}ticas, Calle Nicol\'{a}s Cabrera, n\textsuperscript{o}13-15, Campus Cantoblanco, Universidad Aut\'{o}noma de Madrid,  28049 Madrid, Spain}
\email{dominik.francoeur@ens-lyon.fr
}
\author[A. Thillaisundaram]{Anitha Thillaisundaram}
 \address{Anitha Thillaisundaram: Centre for Mathematical Sciences, Lund University,  
223 62 Lund, Sweden}
 \email{anitha.t@cantab.net}
\date{\today}
\keywords{GGS-groups, branch groups, maximal subgroups}
\subjclass[2010]{Primary  20E08;  Secondary 20E28}
 \thanks{This research was supported by a London Mathematical Society Research in Pairs (Scheme 4) grant.}
\begin{document}

\begin{abstract}
  A Grigorchuk-Gupta-Sidki (GGS-)group is a subgroup of the automorphism group of
  the $p$-regular rooted tree for an odd prime $p$, generated by one rooted automorphism
  and one directed automorphism. Pervova proved that all torsion GGS-groups do
  not have maximal subgroups of infinite index.  Here we extend the result to non-torsion GGS-groups,  which include the weakly regular branch, but not branch, GGS-group. 
\end{abstract}

\maketitle


\section{Introduction}
The automorphism group of an infinite spherically homogeneous rooted tree is well established as a source of interesting finitely generated infinite groups, such as finitely generated groups of intermediate word growth, finitely generated infinite torsion groups, finitely generated amenable but not elementary amenable groups, and finitely generated just infinite groups. Early constructions were produced
by Grigorchuk~\cite{Grigorchuk} and Gupta and Sidki~\cite{Gupta} in the 1980s, which then led to a generalised family of so-called GGS-groups. 

An important type of subgroup of the automorphism group of an infinite spherically homogeneous rooted tree is one having subnormal subgroup structure similar to the corresponding structure in the full group of automorphisms of the tree. These subgroups are termed branch groups; see Section 2 for the definition.

The study of  maximal subgroups of finitely generated branch groups began with the work of Pervova~\cite{Pervova3, Pervova4}, who proved that the torsion Grigorchuk groups and
the torsion GGS-groups do not contain maximal subgroups of infinite index.
Bondarenko~\cite{Bondarenko}  gave
 the first example of a finitely
generated branch group that does have maximal subgroups of infinite
index.  His method does not apply to
groups acting on the  binary and ternary rooted trees. However, recently the first author and Garrido~\cite{FG} provided the first examples of finitely generated branch groups, acting on the binary rooted tree,  with maximal subgroups of infinite index. Their examples are the non-torsion \v{S}uni\'{c} groups. 
For an extensive introduction to the subject of maximal subgroups of finitely generated branch groups, we refer the reader to~\cite{FG}.

The first author  and Garrido (see~\cite{Francoeur}) have further shown that certain non-torsion GGS-groups, namely the generalised Fabrykowski-Gupta groups, one for each odd prime~$p$, do not have maximal subgroups of infinite index. 
In this paper, we extend this result to all non-torsion GGS-groups. 
We recall that a GGS-group acts on the  $p$-regular rooted tree (also called the $p$-adic tree) for $p$ an odd prime, with generators $a$ and $b$, where $a$ cyclically permutes the $p$ maximal subtrees rooted at the first-level vertices, whereas $b$ fixes the first-level vertices pointwise and  is recursively defined by the tuple $(a^{e_1},\dots,a^{e_{p-1}},b)$ which corresponds to the action of~$b$ on the maximal subtrees, for some exponents $e_1,\ldots,e_{p-1}\in\mathbb{F}_p$.

\begin{theorem} \label{thm:main-result} Let $G$ be a GGS-group
  acting on the $p$-regular rooted tree, for $p$ an odd prime. Then every maximal subgroup of $G$  is  of finite index. Furthermore, if~$G$ is branch, then every maximal subgroup is normal and of index~$p$ in~$G$.
\end{theorem}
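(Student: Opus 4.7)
The plan is to adapt Pervova's strategy to the non-torsion setting, in which the technique of taking high powers of elements to suppress contributions is no longer available. Throughout, let $M$ be a maximal subgroup of $G$. Since $G$ is $2$-generated, its abelianisation $G^{ab}$ is finitely generated, so if $G' \subseteq M$ then $M$ is normal of finite index. One may therefore assume $MG' = G$, i.e.\ that $M$ is \emph{prodense}. The goal is to show that any proper prodense subgroup of $G$ has finite index, and that in the branch case no such subgroup is maximal; this would yield the two assertions respectively.

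Because $G' \le \st_G(1)$, prodensity of $M$ implies that $M$ acts transitively on the first level of the tree, so that $H := M \cap \st_G(1)$ has index $p$ in $M$. I would next analyse the first-level sections $\psi_i(H)$ of $H$. Using that $\st_G(1)$ is generated by the conjugates $b^{a^j}$, whose section at the vertex $i$ is either a power of $a$ (when $i \neq j$) or $b$ itself (when $i = j$), the expectation is that each $\psi_i(H)$ remains prodense in a copy of $G$. Iterating this projection argument, one descends through deeper levels; the weakly regular branch structure of $G$ (every non-torsion GGS-group is at least weakly regular branch) keeps the images prodense at each step. The descent must eventually deliver a non-trivial element of some rigid vertex stabilizer $\rist_G(v)$ lying inside $M$. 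Conjugating such an element throughout $G$ produces a non-trivial normal subgroup contained in $M$; combined with prodensity this forces $M = G$, contradicting the assumption that $M$ is proper. Hence every maximal subgroup has finite index.

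For the branch case, I would exploit the fact that the rigid stabilizers $\rist_G(n)$ have finite index in $\st_G(n)$, and in particular that $G'$ contains the derived rigid stabilizer $\rist_G(n)'$ for suitable $n$. This stronger rigidity should rule out proper prodense subgroups entirely, so every maximal subgroup contains $G'$ and is therefore normal. Since for a branch GGS-group the abelianisation $G^{ab}$ is elementary abelian of rank two, every maximal subgroup has index $p$, completing the proof.

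The main obstacle is the projection step just described. In the torsion case, Pervova replaces an element of $H$ by a suitable power to kill the $b$-component of its section, exploiting that $b$ has order $p$; this device fails when $b$ has infinite order. To circumvent it I would argue via the precise form of the sections of $b^{a^j}$---exactly one of them is again $b$, while the other $p-1$ are powers of $a$ and hence of order dividing $p$---and use carefully chosen products and commutators modulo $G'$ to produce elements of $H$ with controllable sections at a chosen vertex, while preserving prodensity of the projection. I expect this combinatorial manipulation, together with the verification that the descent actually terminates in a rigid stabilizer, to be the most delicate part of the argument.
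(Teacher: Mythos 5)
Your overall frame (reduce to prodense subgroups, pass to first-level sections, descend through the tree, contradict properness) is the same as the paper's, but the decisive step is missing and the mechanism you propose in its place does not work. You assert that the descent "must eventually deliver a non-trivial element of some $\rist_G(v)$ lying inside $M$" and that conjugating it throughout $G$ "produces a non-trivial normal subgroup contained in $M$". That inference is false: the normal closure in $G$ of an element of $M$ need not be contained in $M$ unless $M$ is normal, and even the whole of $\rist_G(v)$ inside $M$ is not normal in $G$. (It is true that a proper prodense subgroup cannot contain a non-trivial normal subgroup of $G$, but nothing in your descent manufactures one.) What actually has to be proved --- and this is the entire technical content of the paper --- is that some section $M_u$ contains the generator $b$, and hence (by the result of Alexoudas--Klopsch--Thillaisundaram for vectors not of Fabrykowski--Gupta type, the remaining case being handled by Francoeur's thesis) that $M_u=G$, contradicting the fact that sections of a proper prodense subgroup stay proper. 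In the torsion case Pervova gets this by raising elements to $p$-th powers; in the non-torsion case the paper replaces this by a minimal-length analysis (the length function with respect to $b$, the trichotomy of Lemma~\ref{general-split-cases} proved via a circulant-matrix rank computation using $\lambda=\sum e_i\neq 0$, the minimal elements $y\in U$, $x\in V$, $w\in W$, the inequality $2|y|\le|x|$, and a long case analysis forcing $|x|=1$). Your sketch correctly locates this as the delicate point but offers only "carefully chosen products and commutators modulo $G'$", which is not a proof. Note also that to invoke "maximal of infinite index $\Rightarrow$ prodense" and "sections of prodense are prodense" one needs that every proper quotient of $G$ has only finite-index maximal subgroups; this is easy for branch $G$ (virtually abelian quotients) but for the constant-vector group $\mathcal{G}$, which is weakly branch and not branch, it needs a separate argument (the paper shows $\mathcal{G}/K''$ is virtually nilpotent), and your proposal ignores this group entirely --- indeed for $\mathcal{G}$ the second assertion of the theorem fails, so no uniform argument through rigid stabilisers of finite index can cover it. Your reduction "assume $MG'=G$, i.e.\ $M$ is prodense" also conflates one normal subgroup with all of them.

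There is a further gap in the branch case. Knowing that every maximal subgroup $M$ has finite index does not by itself give $G'\le M$: a maximal subgroup of finite index is normal only if the relevant finite quotient is, say, nilpotent. The paper gets this from the congruence subgroup property of branch GGS-groups: $\st_G(n)\le M$ for some $n$, and $G/\st_G(n)$ is a finite $p$-group, whence $G'\le M\trianglelefteq G$ with index $p$. Your appeal to $\rist_G(n)'\le G'$ and "stronger rigidity", and to the rank-two elementary abelianisation alone, does not deliver the normality of $M$.
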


There is only one GGS-group that is weakly branch but not branch; this is the GGS-group~$\mathcal{G}$ where the automorphism~$b$ is defined by $(a,\ldots,a,b)$. For the group~$\mathcal{G}$, it turns out that there are maximal subgroups that are not normal, nor of index~$p$ in~$\mathcal{G}$. Also there are infinitely many maximal subgroups; see Proposition~\ref{pro:constant}. 

We recall that two
groups are  \textit{commensurable} if they have isomorphic
subgroups of finite index.  The following result is a consequence of the work  of Grigorchuk and Wilson~\cite{GrigWils}, and the proof follows exactly as in~\cite[Cor.~1.3]{AKT}:

\begin{corollary} \label{cor:main}
  Let $G$ be  a branch GGS-group
  acting on the $p$-regular rooted tree, for $p$ an odd prime.  If $H$ is a group
  commensurable with~$G$, then every maximal subgroup of $H$ has finite
  index in~$H$.
\end{corollary}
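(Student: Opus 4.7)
The plan is to emulate the argument of \cite[Cor.~1.3]{AKT}. Commensurability unpacks into finite-index subgroups $H_{0}\leq H$ and $G_{0}\leq G$ together with an isomorphism $\varphi\colon G_{0}\xrightarrow{\sim} H_{0}$. Since $G$ is finitely generated, so are $G_{0}$, $H_{0}$ and $H$; and after passing to the normal core of $H_{0}$ in $H$, one may assume $H_{0}\trianglelefteq H$ with $[H:H_{0}]<\infty$, the image of this core still being a (not necessarily normal) finite-index subgroup of $G$ via $\varphi^{-1}$.

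The next step is to transport Theorem~\ref{thm:main-result} across the isomorphism. Theorem~\ref{thm:main-result} asserts that every maximal subgroup of $G$ is normal of index $p$. The structural work of Grigorchuk and Wilson~\cite{GrigWils} upgrades this to a hereditary statement: every maximal subgroup of every finite-index subgroup of $G$ has finite index. Applying this to the chosen finite-index subgroup of $G$ and pulling back along $\varphi$, one concludes that every maximal subgroup of $H_{0}$---and more generally of every finite-index subgroup of $H_{0}$---has finite index.

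The final step is to lift the conclusion from $H_{0}$ back up to $H$. Let $M$ be a maximal subgroup of $H$. If $H_{0}\subseteq M$, then $M$ is the preimage of a subgroup of the finite group $H/H_{0}$, and hence has finite index. Otherwise, maximality of $M$ forces $MH_{0}=H$, so the second isomorphism theorem gives $[H:M]=[H_{0}:M\cap H_{0}]$, and $M\cap H_{0}\trianglelefteq M$ because $H_{0}\trianglelefteq H$. One then runs the argument of \cite[Cor.~1.3]{AKT} to conclude that $[H_{0}:M\cap H_{0}]<\infty$. I expect this last step to be the main obstacle: the bare statement ``every maximal subgroup of $H_{0}$ has finite index'' does not by itself yield $[H_{0}:M\cap H_{0}]<\infty$, since a proper subgroup contained in a finite-index maximal subgroup need not itself be of finite index. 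The resolution is precisely to exploit the hereditary form of the Grigorchuk-Wilson input---that the property passes to \emph{every} finite-index subgroup of $H_{0}$---in combination with the normality of $M\cap H_{0}$ in $M$, which rigidifies the way $M\cap H_{0}$ can sit inside $H_{0}$ and forces the required finiteness of the index.
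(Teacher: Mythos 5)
Your overall architecture --- unpack commensurability into $G_0\le G$, $H_0\le H$ of finite index with $G_0\cong H_0$, use the Grigorchuk--Wilson input to transfer the conclusion of Theorem~\ref{thm:main-result} to the finite-index subgroup $G_0\cong H_0$, then lift from $H_0$ to $H$ --- is the same route the paper takes by deferring to \cite{GrigWils} and \cite[Cor.~1.3]{AKT}, and your step 2 black-boxes the downward heredity (from $G$ to its finite-index subgroups) exactly where the paper's citations do their work. The genuine gap is your step 3, which you yourself flag: you correctly observe that ``every maximal subgroup of $H_0$ has finite index'' does not by itself control $[H_0:M\cap H_0]$, but your proposed resolution --- the hereditary form of the property for all finite-index subgroups of $H_0$ combined with $M\cap H_0\trianglelefteq M$ ``rigidifying'' the embedding --- is an assertion, not an argument. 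I do not see how to make it work as stated: for instance, taking a finite-index maximal subgroup $L\le H_0$ containing $M\cap H_0$, intersecting its finitely many $M$-conjugates and iterating only produces a strictly descending chain of $M$-invariant finite-index subgroups containing $M\cap H_0$, with no evident termination.

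The missing step is standard and needs neither the hereditary form nor normality of $H_0$ in $H$; only the property for $H_0$ itself is used. If $M<H$ is maximal of infinite index, then $MN=H$ for every finite-index normal $N\trianglelefteq H$ (otherwise $N\le M$ would force $[H:M]<\infty$), so $M$ is dense in the profinite topology of $H$. Given $h_0\in H_0$ and any finite-index subgroup $V\le H_0$, let $U$ be the normal core of $V$ in $H$; writing $h_0=mu$ with $m\in M$, $u\in U\le H_0$, we get $m=h_0u^{-1}\in M\cap H_0$, hence $(M\cap H_0)V=H_0$. Thus $M\cap H_0$ is dense in the profinite topology of $H_0$, and it is proper (else $H_0\le M$). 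Since $H_0$ is finitely generated, $M\cap H_0$ lies in a maximal subgroup of $H_0$, which by hypothesis has finite index, hence is open and closed --- contradicting density. This is precisely the statement that the property ``passes to finite extensions'', i.e.\ \cite[Cor.~5.1.3]{Francoeur}, which the paper itself invokes in the proof of Proposition~\ref{pro:virtually-nilpotent}; citing it, or reproducing the density argument above, closes your step 3. Finally, note that the ``hereditary statement'' you attribute to \cite{GrigWils} is not stated there in that form; extracting it is the content of the argument of \cite[Cor.~1.3]{AKT} that the paper points to, so a self-contained write-up should make that dependence explicit rather than attributing it to \cite{GrigWils} alone.
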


\smallskip

\noindent\textit{Organisation}. Section~2 contains preliminary material on groups acting on the $p$-adic tree. In Section~3, we formally define the GGS-groups and state some of their basic properties. In Section~4 we set up the scene to prove Theorem~\ref{thm:main-result}, whose proof we complete in Section~5.

\smallskip

\noindent
\textit{Notation.}
Throughout, we  use the convention $[x,y] = x^{-1}y^{-1}xy$ and $x^y=y^{-1}xy$. For a group acting on a rooted tree, we always use the right action.

\medskip

\noindent\textbf{Acknowledgements.} We thank the referees for suggesting valuable improvements to the exposition of the paper.


\section{Preliminaries} \label{sec:2} In the present section we recall
the notion of branch groups and establish prerequisites for the rest
of the paper.  For more information,
see~\cite{BarthGrigSunik,NewHorizons}.

\subsection{The \texorpdfstring{$p$}{p}-regular rooted tree and its automorphisms}
Let $T$ be the  $p$-regular rooted tree, 
 for an odd prime~$p$.  Let
$X=\{1,2,\ldots,p\}$ be an alphabet on $p$ letters.  The
set of vertices of~$T$ can be identified with the free monoid~$X^*$,  and we will freely use this identification without special mention. The root of~$T$ corresponds to the
empty word~$\varnothing$, and for each word $v\in X^*$ and letter~$x$, an edge connects $v$ to $vx$. There is a natural length function
on~$X^*$, and the words~$w$ of length $|w| = n$, representing
vertices that are at distance~$n$ from the root, form the
\textit{$n$th layer} of the tree. 
The
\textit{boundary}~$\partial T$ consisting of all infinite simple rooted paths
is in one-to-one correspondence with the $p$-adic integers.  

For $u$ a vertex, we write $T_u$ for the full rooted subtree of~$T$ that has its root at~$u$ and includes all vertices $v$ with $u$ a prefix of~$v$. For any two vertices $u$ and $v$ the
subtrees $T_u$ and $T_v$ are isomorphic under the map that deletes the
prefix $u$ and replaces it by the prefix $v$. We refer to this
identification as the \emph{natural identification of subtrees} and write
$T_n$ to denote the subtree rooted at a generic vertex of level~$n$. 

We observe that every automorphism of~$T$ fixes the root and that the
orbits of $\Aut T$ on the vertices of the tree~$T$ are precisely its
layers.  Let $f \in \Aut T$ be an automorphism of~$T$.  The image of a
vertex $u$ under $f$ is denoted by $u^f$.  For a vertex $u$, considered as a word over $X$, and a letter $x \in X$ we have $(ux)^f=u^fx'$
where $x' \in X$ is uniquely determined by $u$ and~$f$.  This gives
a permutation $f(u)$ of $X$ so that
\[
(ux)^f = u^f x^{f(u)}.
\]
The permutation~$f(u)$ is called the \textit{label} of~$f$ at~$u$. The automorphism $f$ is called \textit{rooted} if $f(u)=1$ for $u\ne
\varnothing$.  The automorphism~$f$ is called \textit{directed}, with directed path
$\ell$ for some $\ell\in \partial T$, if the support $\{u \mid f(u)\ne1 \}$ of its
labelling is infinite and contains only vertices at distance $1$ from~$\ell$.

The \textit{section} of $f$ at a vertex $u$ is the unique automorphism
$f_u$ of $T \cong T_{|u|}$ given by the condition $(uv)^f = u^f
v^{f_u}$ for $v \in X^*$.

\subsection{Subgroups of \texorpdfstring{$\Aut T$}{Aut T}}
Let $G\le \Aut T$. The
\textit{vertex stabiliser} $\text{st}_G(u)$ is the subgroup consisting of
elements in~$G$ that fix the vertex~$u$.  For $n \in \mathbb{N}$, the
\textit{$n$th level stabiliser} $\st_G(n)= \cap_{|v|=n} \text{st}_G(v)$
is the subgroup  of automorphisms that fix all vertices at
level $n$.  We emphasise the difference in the notation of vertex stabilisers and level stabilisers, which aims to avoid confusion from the fact that the first-level vertices of~$T$ are often identified with the elements of~$X$. Note that elements in $\st_G(n)$ fix all vertices up to
level $n$ and that $\st_G(n)$ has finite index in $G$.

The full automorphism group $\Aut T$ is a profinite group:
\[
\Aut T= \varprojlim_{n\to\infty} \Aut T_{[n]},
\]
where $T_{[n]}$ denotes the subtree of $T$ on the finitely many
vertices up to level~$n$. The topology of $\Aut T$ is defined by the
open subgroups $\st_{\Aut T}(n)$, for $n \in \mathbb{N}$.  
For $G\le \Aut T$, we say that the subgroup $G$ 
has the \textit{congruence subgroup property} if 
for
every subgroup $H$ of finite index in $G$, there exists some $n\in \mathbb{N}$ such
that $\st_G(n)\leq H$. 

For $n\in \mathbb{N}$, every $g\in \st_{\Aut T} (n)$ can be identified with a collection
$g_1,\ldots,g_{p^n}$ of elements of $\Aut T_n$, where $p^n$ is the
number of vertices at level $n$.  Denoting the vertices of $T$ at level $n$ by $u_1, \ldots, u_{p^n}$, there is a natural isomorphism
\[
\st_{\Aut T}(n) \cong \prod\nolimits_{i=1}^{p^n} \Aut T_{u_i}
\cong \Aut T_n \times \overset{p^n}{\cdots} \times \Aut T_n.
\]
Recall that $\Aut T_n$ is isomorphic to $\Aut T$ via the
natural identification of subtrees. Therefore the decomposition
$(g_1,\ldots,g_{p^n})$ of~$g$ defines an embedding
\[
\psi_n \colon \st_{\Aut T}(n) \rightarrow \prod\nolimits_{i=1}^{p^n}
\Aut T_{u_i} \cong \Aut T \times \overset{p^n}{\cdots} \times
\Aut T.
\]
For convenience, we will write $\psi=\psi_1$.

For  $\omega\in X^*$, we further define 
\[
\varphi_\omega :\text{st}_{\Aut T}(\omega) \rightarrow \Aut T_{\omega} \cong \Aut T
\]
to be the natural restriction of $f\in \text{st}_{\Aut T}(\omega)$ to its section $f_\omega$.

We write $G_u=\varphi_u(\text{st}_G(u))$ for the restriction of the vertex stabiliser
$\text{st}_G(u)$ to the subtree rooted at a vertex $u$. 
We say that $G$
is \emph{self-similar} if $G_u$ is contained in~$G$ for every vertex~$u$, and we say that $G$ is \textit{fractal} if $G_u$ equals~$G$ for every vertex~$u$, after the natural identification of subtrees.

The subgroup $\text{rist}_G(u)$, consisting of all automorphisms in~$G$ that fix all vertices~$v$ of~$T$ not having $u$ as a prefix, is
called the \textit{rigid vertex stabiliser} of $u$ in $G$. 
The
\textit{rigid $n$th level stabiliser} is the product
\[
\text{Rist}_G(n)=\prod\nolimits_{i=1}^{p^n} \text{rist}_G(u_i) \trianglelefteq G
\]
of the rigid vertex stabilisers of the vertices $u_1, \ldots, u_{p^n}$
at level~$n$. 

Let $G$ be a subgroup of $\Aut T$ acting \textit{spherically
  transitively}, i.e. transitively on every layer of~$T$. 
Then $G$ is a \emph{weakly branch
  group} if $\mathrm{Rist}_G(n)$ is non-trivial for every
$n \in \mathbb{N}$, and $G$ is a   \emph{branch
  group} if $\mathrm{Rist}_G(n)$ has finite index in $G$ for every
$n \in \mathbb{N}$.  
If, in addition, the group $G$ is
self-similar and $1 \not = K \leq G$ with
$K\times \cdots \times K \subseteq \psi(K\cap \st_G(1))$ and
$\lvert G : K \rvert < \infty$, then $G$ is said to be \emph{regular
  branch over $K$}.  If the condition $\lvert G : K \rvert < \infty$ in the previous definition is omitted, then $G$ is said to be \emph{weakly regular branch over $K$}.  


\section{The GGS-groups} 

By $a$ we denote the rooted automorphism, corresponding to the
$p$-cycle $(1 \, 2 \, \cdots \, p)\in \Sym(p)$, that cyclically
permutes the vertices $u_1, \ldots, u_p$ at the first level.  
Given a  non-zero vector $
\mathbf{e} =(e_{1}, e_{2},\ldots , e_{p-1})\in
(\mathbb{F}_p)^{p-1}$,
we recursively define a  directed automorphism $b \in
\st_{\Aut T}(1)$ via
\[
\psi(b)=(a^{e_{1}}, a^{e_{2}},\ldots,a^{e_{p-1}},b).
\]
We call the subgroup $G=G_{\mathbf{e}}=\langle a, b
\rangle$ of $\Aut T$ the \textit{GGS-group} associated
to the defining vector $\mathbf{e}$.  We observe that $\langle a
\rangle \cong\langle b \rangle \cong C_p$
are cyclic groups of order~$p$. Hence, whenever we refer to an exponent of~$a$ or of~$b$, it goes without saying that it is an element of~$\mathbb{F}_p$.

 A GGS-group $G$ acts spherically transitively on the tree $T$, and every section of every element of $G$ is contained in  $G$. Moreover, a GGS-group  $G$ is fractal.

If
  $G_{\mathbf{e}}=\langle a, b\rangle$ is a GGS-group corresponding to the defining vector
  $\mathbf{e}$, then $G$ is an infinite
  $p$-group if and only if $ \sum\nolimits_{j=1}^{p-1} e_{j}= 0$  in $\mathbb{F}_p$;
  compare \cite{NewHorizons, Vovkivsky}.
  
We also call $G=\langle a,b\rangle$ a \emph{generalised Fabrykowski-Gupta group} if 
\[
\psi(b)=(a,1,\ldots,1,b);
\]
 see \cite{Francoeur}.  By~\cite[Thm.~2.16]{FAZR2}, it follows that a GGS-group whose defining vector contains only one non-trivial element is conjugate in $\Aut T$ to a generalised Fabrykowski-Gupta group.

We write $\mathcal{G}= \langle a,b \rangle$ with $\psi(b)
  = (a,\ldots,a,b)$,
for the GGS-group arising from the constant defining vector~$(1,\ldots,1)$. It is known that $\mathcal{G}$ is weakly regular branch~\cite[Lem.~4.2]{FAZR2} but not branch~\cite[Thm.~3.7]{FAGUA}.

For all other GGS-groups $G \ne \mathcal{G}$, we have from \cite{FAZR2} that $G$ is regular branch over $\gamma_3(G)$. 
Further, from~\cite{FAGUA}, a GGS-group~$G$  has the congruence subgroup property and is just infinite if and only if $G \ne \mathcal{G}$; we recall that an infinite group $G$ is
\emph{just infinite} if all its proper quotients are finite.

The following result is useful.
\begin{lemma}\label{lem:derived-product}
For $G$ a GGS-group, let $g\in G'$ and write $\psi(g)=(g_1,\ldots,g_p)$. Then $g_1g_2\cdots g_{p}\in G'$.
\end{lemma}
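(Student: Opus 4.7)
The plan is to exploit the abelian quotient $G/G'$ to turn the claim into a statement about a single group homomorphism. I would define
\[
\bar\phi \colon \st_G(1) \longrightarrow G/G', \qquad \bar\phi(g) = \overline{g_1 g_2 \cdots g_p},
\]
where $\psi(g) = (g_1,\ldots,g_p)$ and the bar denotes reduction modulo~$G'$. Since $G$ is self-similar, each $g_i$ lies in~$G$; since $G/G'$ is abelian, the product does not depend on the order of the factors. Because $\psi$ is a homomorphism into a direct product, $\bar\phi$ is a group homomorphism. Also, $G' \leq \st_G(1)$ because $G/\st_G(1) \cong \langle a \rangle$ is cyclic. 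The statement to prove is then exactly that $\bar\phi$ vanishes on $G'$.

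Next, I would verify that $\ker\bar\phi$ is normal in all of $G$, not merely in $\st_G(1)$. Conjugating $h \in \st_G(1)$ by another element of $\st_G(1)$ leaves $\bar\phi(h)$ unchanged because $G/G'$ is abelian. Conjugation by $a$ takes $\psi(h) = (h_1,\ldots,h_p)$ to a cyclic shift $(h_{p},h_1,\ldots,h_{p-1})$, and once again the product in the abelian group $G/G'$ is unaffected. Since $G = \langle a\rangle \st_G(1)$, we conclude $\ker\bar\phi \trianglelefteq G$.

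Finally, because $G = \langle a,b\rangle$ is $2$-generated, $G'$ equals the $G$-normal closure of the single commutator $[a,b]$, so by the previous step it suffices to show $\bar\phi([a,b]) = 1$. Writing $[a,b] = (b^a)^{-1} b$ and using the homomorphism property together with the $a$-conjugation invariance $\bar\phi(b^a)=\bar\phi(b)$ established above, one gets $\bar\phi([a,b]) = \bar\phi(b^a)^{-1}\bar\phi(b) = 1$, as required. The only step that is not purely formal is the $G$-normality of $\ker\bar\phi$, which is the main conceptual point; but it is immediate from the abelianness of $G/G'$ and the fact that $a$ permutes the first-level coordinates cyclically.
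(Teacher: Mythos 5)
Your proposal is correct and takes essentially the same route as the paper: both reduce the claim, via the coordinate-product homomorphism $\st_G(1)\to G/G'$ and its invariance under conjugation by $a$ and by $\st_G(1)$, to checking the single normal generator $[a,b]$ of $G'$. The only difference is cosmetic: you verify $\bar\phi([a,b])=\bar\phi(b^a)^{-1}\bar\phi(b)=1$ formally, whereas the paper computes $\psi([a,b])$ explicitly; your write-up also makes explicit the ``it suffices to consider $[a,b]$'' reduction that the paper leaves implicit.
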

\begin{proof}
It suffices to show the statement for $g=[a,b]$. As $[a,b]=(b^{-1})^ab$, we have that $\psi([a,b])=(b^{-1}a^{e_1},a^{e_2-e_1},\ldots, a^{e_{p-1}-e_{p-2}},a^{-e_{p-1}}b)$, and the result follows.
\end{proof}

As evident in the proof of the above result, all actions of group elements on the tree~$T$ will be taken on the right.


\subsection{Length function} 
Here we recall the following  items from~\cite{Pervova4}:
the abelianisation $G/G'$ of a GGS-group~$G$ and a natural length function on elements of~$G$.

Let $G =\langle a,b\rangle$ be a
GGS-group acting on the  $p$-adic  tree
$T$.  
We consider
\[
H = \langle \hat{a}, \hat{b}\mid \\ \hat{a}^p=\hat{b}^p=1
 \rangle,
\]
the free product $\langle \hat{a}\rangle * \langle
\hat{b} \rangle$ of  cyclic groups $\langle
\hat{a}\rangle \cong C_p$ and  $\langle
\hat{b} \rangle \cong C_p$.  There is a unique
epimorphism $\pi\colon H \rightarrow G$ such that $\hat{a} \mapsto a$
and $\hat{b} \mapsto b$, which induces an
isomorphism from $H/H'\cong \langle \hat a \rangle \times \langle
\hat{b} \rangle \cong C_p^{2}$ to $G/G'$; see \cite{Pervova4}.

Let $h \in H$.  Recall that $h$ can be uniquely represented in
the form
\[
h = \hat{a}^{\alpha_1} \cdot
 \hat{b}^{\beta_{1}} \cdot
  \hat{a}^{\alpha_2} \cdot \hat{b}^{\beta_{2}}\cdots  \hat{a}^{\alpha_m} \cdot
  \hat{b}^{\beta_{m}} \cdot
  \hat{a}^{\alpha_{m+1}},
\]
where $m\in \mathbb{N}\cup \{0\}$, $\alpha_1, \ldots, \alpha_{m+1}\in \mathbb{F}_p$ with $
  \alpha_i  \ne 0$ for $i\in
    \{2,\ldots,m\}$, and $\beta_{1}, \ldots, \beta_{m} \in \mathbb{F}_p^*$.
    
We denote by $|h| = m$ the \textit{length} of $h$, with
respect to the factor $\langle \hat{b} \rangle$.  Clearly,
for $h_1,h_2 \in H$ we have
\begin{equation} \label{eq:product-h1-h2}
  |h_1 h_2| \leq |h_1| + |h_2|.
\end{equation}

Then, for $G=\langle a,b \rangle$  a GGS-group, 
the \textit{length} of $g \in G$ is
\[
|g| = \min \{ |h| \mid h \in \pi^{-1}(g) \}.
\]
Based on \eqref{eq:product-h1-h2}, we deduce that for $g_1,g_2
\in G$,
\begin{equation} \label{eq:product-g1-g2}
  |g_1 g_2| \leq |g_1| + |g_2|.
\end{equation}

We end this section with the following result from~\cite{AKT}, specialised to the setting of GGS-groups:
\begin{lemma} \cite[Lem.~4.4]{AKT} \label{shortening} Let $G$ be a  GGS-group, and $g \in \st_G(1)$ with $\psi(g)=(g_1,\ldots,g_{p})$.
  Then $\sum_{j=1}^{p} |g_j| \le |g|$, and
  $|g_j| \le  \frac{|g|+1}{2}$ for each
  $j\in \{1,\ldots,p\}$.

  In particular, if $|g|>1$ then $|g_j| < |g|$
  for every $j \in \{1,\ldots,p\}$.
\end{lemma}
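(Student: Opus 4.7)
The plan is to lift $g$ to a minimal-length word in $H$, factor it as a product of $m=|g|$ conjugates of $\hat{b}$-powers, and read off $\psi(g)$ coordinatewise. Pick $h\in\pi^{-1}(g)$ with $|h|=|g|=:m$ and write it in normal form
\[
h = \hat{a}^{\alpha_1}\hat{b}^{\beta_1}\hat{a}^{\alpha_2}\hat{b}^{\beta_2}\cdots\hat{a}^{\alpha_m}\hat{b}^{\beta_m}\hat{a}^{\alpha_{m+1}}.
\]
Because $g\in\st_G(1)$, the image of $g$ in $G/\st_G(1)\cong\langle a\rangle$ is trivial, forcing $\alpha_1+\cdots+\alpha_{m+1}=0$ in $\mathbb{F}_p$. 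Setting $s_i:=\alpha_1+\cdots+\alpha_i$, this allows me to rewrite $h$ as a product of $m$ conjugates
\[
h = \prod_{i=1}^{m}\hat{a}^{s_i}\hat{b}^{\beta_i}\hat{a}^{-s_i},
\]
with no trailing $\hat{a}$-syllable. Mapping down to $G$ and applying $\psi$ coordinatewise, each factor $a^{s_i}b^{\beta_i}a^{-s_i}$ contributes a tuple that is a cyclic shift by $s_i$ of $\psi(b^{\beta_i})=(a^{\beta_i e_1},\ldots,a^{\beta_i e_{p-1}},b^{\beta_i})$: exactly one coordinate is a power of $b$, at some position $j_i\in\{1,\ldots,p\}$ determined by $s_i\bmod p$, and the other $p-1$ coordinates are powers of $a$.

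For each $j\in\{1,\ldots,p\}$, set $n_j:=|\{i:j_i=j\}|$. Then $g_j$ is an ordered product of $m$ factors, each either a power of $a$ or a power of $b$, with exactly $n_j$ factors of the latter kind. Lifting every such factor to the matching syllable of $H$ exhibits a preimage of $g_j$ with at most $n_j$ $\hat{b}$-syllables, and reducing to normal form cannot increase this count, so $|g_j|\le n_j$. The first inequality follows at once:
\[
\sum_{j=1}^{p}|g_j| \;\le\; \sum_{j=1}^{p}n_j \;=\; m \;=\; |g|,
\]
since every $i\in\{1,\ldots,m\}$ contributes to exactly one summand on the right.

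The refined estimate $|g_j|\le(|g|+1)/2$ is the main content. Here I would invoke the normal-form condition that $\alpha_{i+1}\neq 0$ for $1\le i\le m-1$, which forces $s_{i+1}-s_i\neq 0$ in $\mathbb{F}_p$ and hence $j_{i+1}\neq j_i$ for every such $i$. Consequently, for any fixed $j$ the set $\{i:j_i=j\}\subseteq\{1,\ldots,m\}$ contains no two consecutive integers, so $n_j\le\lceil m/2\rceil\le(m+1)/2$, and therefore $|g_j|\le(m+1)/2$. The in-particular clause is then immediate, since $(m+1)/2<m$ whenever $m>1$. I do not anticipate a serious obstacle; the one step I would handle with care, and thus the main potential pitfall, is verifying precisely how conjugation by $a^{s_i}$ cyclically permutes the coordinates of $\psi$, so that $j_i$ is genuinely well-defined modulo $p$ and really does shift by the non-zero amount $\alpha_{i+1}$ between consecutive indices.
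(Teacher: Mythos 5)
Your argument is correct: lifting $g$ to a minimal normal-form word, rewriting it as a product of $m$ conjugates $a^{s_i}b^{\beta_i}a^{-s_i}$ (using $s_{m+1}=0$ since $g\in\st_G(1)$), and counting the $b$-occurrences landing in each coordinate, with $j_{i+1}\ne j_i$ forced by $\alpha_{i+1}\ne 0$, gives both $\sum_j |g_j|\le |g|$ and $|g_j|\le \lceil |g|/2\rceil \le \frac{|g|+1}{2}$. The paper itself does not prove this lemma but quotes it from [AKT, Lem.~4.4], and your proof is essentially the standard argument given there, so there is nothing to fix.
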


\subsection{The GGS-group defined by the constant vector}

Due to the group $\mathcal{G}$ not being branch, some further properties of~$\mathcal{G}$ need to be established for the next section.

By~\cite[Lem.~4.2]{FAZR2}, the group $\mathcal{G}$ is weakly regular branch over $K'$, where $K=\langle (ba^{-1})^{a^i}\mid i\in  \{0,\ldots,p-1\}\rangle^\mathcal{G}$.

\begin{lemma}
Let $\mathcal{G}$ and $K$ be as above. Then $K'$ is a subdirect product of $K\times \overset{p}\cdots\times K$ and $\psi(K'')\ge \gamma_3(K)\times \overset{p}\cdots \times \gamma_3(K)$.
\end{lemma}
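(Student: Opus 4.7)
My plan is to establish the subdirect claim first, and then to obtain the inclusion for $\psi(K'')$ as a consequence. Write $c = ba^{-1}$, so that $K = \langle c \rangle^\mathcal{G}$.

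For the inclusion $\psi(K') \subseteq K \times \overset{p}\cdots \times K$, a key observation is that $\mathcal{G}/K \cong C_p$ is abelian (since $c \in K$ forces $aK = bK$), hence $\mathcal{G}' \subseteq K$. I would compute $\psi([a, b])$ by writing $[a, b] = b^{-a}b$ and using the cyclic shift $\psi(b^a) = (b, a, \ldots, a)$ of $\psi(b)$, obtaining $\psi([a, b]) = (c^{-a}, 1, \ldots, 1, c^a) \in K \times \cdots \times K$, where $c^{-a} := (c^a)^{-1}$. Since $\mathcal{G}'$ is the normal closure of $[a, b]$ in $\mathcal{G}$, any conjugate $[a, b]^g$---decomposed as $([a, b]^{a^k})^{g'}$ with $g' \in \st_\mathcal{G}(1)$---has $\psi$-image in $K^p$: conjugation by $a^k$ cyclically permutes the coordinates, while conjugation by $g'$ acts componentwise, and both preserve $K^p$ since $K \trianglelefteq \mathcal{G}$. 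Hence $\psi(K') \subseteq \psi(\mathcal{G}') \subseteq K \times \overset{p}\cdots \times K$.

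For the subdirect claim, let $N := \pi_1(\psi(K'))$, where $\pi_1$ denotes the first-coordinate projection. Conjugation by $a$ cyclically permutes coordinates of elements of $\psi(K')$, so $N = \pi_i(\psi(K'))$ for every $i$. Since $\mathcal{G}$ acts regularly on the first layer (its action factors through $C_p$), $\st_\mathcal{G}(u_1) = \st_\mathcal{G}(1)$, so fractality yields $\pi_1(\psi(\st_\mathcal{G}(1))) = \varphi_{u_1}(\st_\mathcal{G}(u_1)) = \mathcal{G}$; consequently $N$ is closed under $\mathcal{G}$-conjugation, so $N \trianglelefteq \mathcal{G}$. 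The weakly regular branch hypothesis gives $K' \subseteq N$, and to go beyond $K'$ I would compute $\psi([[a, b], c])$ by expanding $[a, b]^{-1} \cdot [a, b]^c$ componentwise and reading off the first section as $c^a$; since $[a, b] \in \mathcal{G}' \subseteq K$ and $c \in K$, we have $[[a, b], c] \in K'$, so $c^a \in N$. Normality then gives $N \supseteq \langle c^a \rangle^\mathcal{G} = K$, yielding $N = K$.

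Finally, for $\psi(K'') \supseteq \gamma_3(K) \times \overset{p}\cdots \times \gamma_3(K)$: given $k \in K$ and $k' \in K'$, the subdirect claim provides $h_1 \in K'$ with $\pi_1(\psi(h_1)) = k$, and the weakly regular branch inclusion provides $h_2 \in K'$ with $\psi(h_2) = (k', 1, \ldots, 1)$. Then $[h_2, h_1] \in K''$ and componentwise $\psi([h_2, h_1]) = ([k', k], 1, \ldots, 1)$, so $(\gamma_3(K), 1, \ldots, 1) \subseteq \psi(K'')$; cyclic shifts under conjugation by $a$ together with taking products then yield the full product. The main technical obstacle in this plan is the explicit computation of $\psi([[a, b], c])$: since $c \notin \st_\mathcal{G}(1)$, one must carefully track the conjugation $c^{-1}[a, b] c$ using the section data $(a, \ldots, a, b)$ and the first-level action of $c$, which is delicate but routine.
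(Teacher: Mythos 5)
Your proposal is correct and follows essentially the same strategy as the paper: exhibit an explicit commutator lying in $K'$ whose first-level sections lie in $K$ and include a normal generator of $K$ (you use $[[a,b],ba^{-1}]$, whose first section is indeed $c^a=a^{-1}b$ under the paper's conventions, while the paper uses $[ba^{-1},(ba^{-1})^a]$), deduce subdirectness via conjugation/fractality, and then combine subdirectness with the weakly regular branch property exactly as the paper does for the inclusion $\gamma_3(K)\times\overset{p}\cdots\times\gamma_3(K)\le\psi(K'')$. The extra details you supply (that $\psi(\mathcal{G}')\le K\times\overset{p}\cdots\times K$ and that the coordinate projection of $\psi(K')$ is normal in $\mathcal{G}$) are correct and merely make explicit what the paper leaves implicit or cites.
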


\begin{proof}
This is similar to \cite[Proof of Prop.~17]{GUA2}. We first show that $K'$ is a subdirect product of $K\times \overset{p}\cdots\times K$. Indeed,  writing 
$y_i=(ba^{-1})^{a^i}$ for $0\le i\le p-1$, we have for $0\le i< j\le p-1$, that
\begin{align*}
\psi([y_i,y_j])&=\psi((b^{-1})^{a^{i-1}} (b^{-1})^{a^{j-2}} b^{a^{i-2}}b^{a^{j-1}})\\
&=\begin{cases}
(1,\overset{i-3}\ldots,1,a^{-2}ba ,b^{-1}a,1,\overset{j-i-2}\ldots,1 ,a^{-1}b^{-1}a^2,a^{-1}b,1,\ldots,1) & \,\,\,\text{if }j>i+1,\\
(1,\overset{i-3}\ldots,1,a^{-2}ba ,b^{-2}a^2,a^{-1}b,1,\ldots,1) & \,\,\,\text{if }j=i+1,
\end{cases}\\
&=\begin{cases}
(1,\overset{i-3}\ldots,1,y_2 ,y_1^{-1},1,\overset{j-i-2}\ldots,1 ,y_2^{-1},y_1,1,\ldots,1) & \qquad\qquad\qquad\text{if }j>i+1,\\
(1,\overset{i-3}\ldots,1,y_2 ,(y_0^{-1}y_1^{-1})^a,y_1,1,\ldots,1) & \qquad\qquad\qquad\text{if }j=i+1,
\end{cases}
\end{align*}
hence the result.

Thus, for every $k_1\in K$ there is some $g_1\in K'$ with $\psi(g_1)=(k_1,*,\ldots,*)$. As $\mathcal{G}$ is weakly regular branch over $K'$, for every $k_2\in K'$, there is some $g_2\in K'$ with $\psi(g_2)=(k_2,1,\ldots,1)$. Therefore,
$\psi([g_1,g_2])=([k_1,k_2],1,\ldots,1)$,
and the second part of the lemma follows.
\end{proof}

\begin{proposition}\label{pro:virtually-nilpotent}
Let $\mathcal{G}$ be the GGS-group defined by the constant vector. Then every proper quotient of~$\mathcal{G}$ is virtually nilpotent and has maximal subgroups only of finite index.
\end{proposition}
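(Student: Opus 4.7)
\bigskip

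\noindent\emph{Plan.} The strategy is twofold: first, show that $\mathcal{G}/K''$ is virtually nilpotent; second, show that every nontrivial normal subgroup $N\trianglelefteq\mathcal{G}$ satisfies $|K'':N\cap K''|<\infty$. Combining these, $\mathcal{G}/N$ is an extension of the finite group $K''N/N\cong K''/(N\cap K'')$ by a quotient of the virtually nilpotent group $\mathcal{G}/K''$; since a finite-by-virtually-nilpotent group is itself virtually nilpotent (pass to the finite-index centralizer of the finite normal subgroup), $\mathcal{G}/N$ is virtually nilpotent. The maximal-subgroup statement then follows from the standard fact that every maximal subgroup of a finitely generated virtually nilpotent group has finite index.

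For the first part, the key claim is $\gamma_3(K)=K''$. The inclusion $\gamma_3(K)\supseteq K''$ is automatic. The reverse inclusion follows from the preceding lemma $\psi(K'')\supseteq\gamma_3(K)\times\cdots\times\gamma_3(K)$ together with the self-similarity of $\mathcal{G}$: for each $h\in\gamma_3(K)$ one picks $g\in K''$ with $\psi(g)=(h,1,\ldots,1)$, and since sections of elements of $K''$ lie in $K''$, we conclude $h\in K''$. Because $\mathcal{G}/K\cong C_p$, the subgroup $K$ has finite index $p$ in $\mathcal{G}$ and is finitely generated by Schreier--Nielsen, so $K/K''$ is a finitely generated nilpotent group of class at most~$2$, and hence $\mathcal{G}/K''$ is virtually nilpotent.

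For the second part, the identity $\gamma_3(K)=K''$ upgrades the branching to $\psi(K'')\supseteq K''\times\cdots\times K''$, so $\mathcal{G}$ is weakly regular branch over $K''$ as well. A standard argument then yields $C_\mathcal{G}(K'')=1$, giving $N\cap K''\ne 1$. To establish the finite-index claim, one takes a nontrivial $g\in N\cap K''$ and propagates it throughout $K''$ using the direct-product branching together with conjugation by $\mathcal{G}$ (in particular by the generator $a$, which cyclically permutes the first-level vertices), eventually showing that the $\mathcal{G}$-normal closure of $g$ has finite index in $K''$.

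The main obstacle is precisely this finite-index claim $|K'':N\cap K''|<\infty$. Because $\mathcal{G}$ is weakly branch but not branch, rigid level stabilizers are not of finite index and cannot serve as shortcuts; the propagation of a single nontrivial element across $K''$ must rely entirely on the direct-product decomposition $\psi(K'')\supseteq K''\times\cdots\times K''$ and the $\mathcal{G}$-conjugation action, which is the technical heart of the argument.
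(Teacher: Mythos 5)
Your overall scheme (reduce to $\mathcal{G}/K''$, then handle arbitrary proper quotients, then use that finitely generated virtually nilpotent groups have only finite-index maximal subgroups) is reasonable, and the observation that $\mathcal{G}$ is weakly regular branch over $K''$ is correct --- but it needs only the trivial inclusion $K''\le\gamma_3(K)$ together with $\psi(K'')\ge\gamma_3(K)\times\cdots\times\gamma_3(K)$, not your equality. The first genuine gap is precisely that equality: the step ``sections of elements of $K''$ lie in $K''$'' has no justification, since self-similarity only places sections of elements of $\mathcal{G}$ in $\mathcal{G}$, and none of $K$, $K'$, $K''$ is section-closed (already the first-level sections of $ba^{-1}$ are $a$ and $b$, which are not in $K$). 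What the structure actually gives is $\psi(K'')\le K'\times\cdots\times K'$, so from $g\in K''$ with $\psi(g)=(h,1,\ldots,1)$ you may only conclude $h\in K'$. Worse, the claim $\gamma_3(K)=K''$ is false (at least for $p\ge 5$): composing $\psi|_{K'}$ with coordinatewise reduction modulo $K'$ gives a homomorphism $\rho\colon K'\to (K/K')^{p}$ that kills $K''$ and transforms under conjugation by permuting coordinates and applying the (invertible) action of the sections through $\mathcal{G}/K$; applying this to $c=[y_0,y_1]$, whose $\rho$-image is supported in $\{p-2,p-1,p\}$ with nonzero entries $\overline{y}_2$ and $\overline{y}_1$ at the end positions, and conjugating by $y_0$, which cyclically shifts that support, shows $[c,y_0]\in\gamma_3(K)\setminus K''$. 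Hence $K/K''$ is not nilpotent and your route to ``$\mathcal{G}/K''$ is virtually nilpotent'' collapses. The paper's argument avoids this: it shows $\mathcal{G}'/K''$ is nilpotent of class at most $2$, using $\psi(\mathcal{G}')\le K\times\cdots\times K$ together with $\psi(K'')\ge\gamma_3(K)\times\cdots\times\gamma_3(K)$, and $\mathcal{G}'$ has finite index.

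The second gap is the reduction from an arbitrary nontrivial normal subgroup $N$ to $K''$. You state the key claim $|K'':N\cap K''|<\infty$ but give no proof, explicitly flagging it as the main obstacle; this is not a detail one can wave at. Since $\mathcal{G}$ is weakly branch but not branch, the standard arguments only guarantee that $N$ contains geometric copies of $K''$ at the vertices of some level, and such a subgroup is in general of infinite index in $K''$, so your finite-index claim is both unproven and stronger than what is needed (and quite possibly false). The paper sidesteps this entirely by invoking \cite[Thm.~4.10]{Francoeur-paper}: for a group weakly regular branch over $K'$, virtual nilpotency of $\mathcal{G}/K''$ already implies that every proper quotient is virtually nilpotent. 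Your final step (maximal subgroups of finite index in finitely generated virtually nilpotent groups) is fine and agrees with the paper's use of the Frattini argument for nilpotent groups plus passage to finite extensions.
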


\begin{proof}
 For the first statement, using~\cite[Thm.~4.10]{Francoeur-paper}, it suffices to show that $\mathcal{G}/K''$ is virtually nilpotent. As $\mathcal{G}'/K''$ is of finite index in $\mathcal{G}/K''$, it suffices to show that $\mathcal{G}'/K''$ is nilpotent. Since $\psi(\mathcal{G}')\le K\times \overset{p}\cdots\times K$ by \cite[Lem.~4.2(iii)]{FAZR2}, the result follows from the second part of the  previous lemma. 
 
 The second statement is immediate, because every virtually nilpotent group has maximal subgroups only of finite index: indeed,   a  nilpotent group $G$ has $\Phi(G)$ containing $G'$, see~\cite{Zassenhaus}. Then every maximal subgroup of a nilpotent group $G$ is normal and hence of finite index. As the property of having only maximal subgroups of finite index passes to finite extensions~\cite[Cor.~5.1.3]{Francoeur}, we have that virtually nilpotent groups have maximal subgroups only of finite index.
\end{proof}


\section{Prodense subgroups}
In the present section we lay out the strategy for proving Theorem~\ref{thm:main-result}, where it suffices to restrict to non-torsion GGS-groups in light of \cite{Pervova4}.

We recall that a subgroup $H$ of $G$ is \textit{prodense}  if  $G = NH$ for every
non-trivial normal subgroup $N$ of~$G$.  Let $G$ be a finitely generated group such that every proper quotient of~$G$ has maximal subgroups only of finite index. Then by~\cite[Prop.~2.21]{Francoeur-paper},  every maximal
subgroup of infinite index in $G$ is prodense and every proper prodense
subgroup is contained in a maximal subgroup of infinite index.

We now recall a key result concerning prodense subgroups of weakly branch groups. 
\begin{proposition}\cite[Lem.~3.1 and Thm.~3.3]{Francoeur-paper} \label{3.3.3}
  Let $T$ be a spherically homogeneous rooted tree and~$G$ a weakly branch group acting on~$T$. Suppose that every proper quotient of~$G$ has maximal subgroups only of finite index. If $H$ is a prodense subgroup of~$G$, then $H_u$ is a prodense subgroup of $G_u$, for every vertex~$u$ of~$T$. Furthermore, if $M<G$ is a proper subgroup, then $M_u<G_u$ is a proper subgroup, and if $M<G$ is a maximal subgroup of~$G$ of infinite index, then $M_u$ is a maximal subgroup of~$G_u$, for every vertex~$u$ of~$T$.
\end{proposition}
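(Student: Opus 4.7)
The strategy is to transfer structural information between $G$ and its section $G_u$ by using the weakly branch hypothesis and the spherical transitivity of~$G$ to lift normal subgroups of~$G_u$ to normal subgroups of~$G$.

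For the first statement, fix a prodense $H \leq G$ and a non-trivial $N \triangleleft G_u$. Spherical transitivity transports~$N$ to a canonical normal subgroup $N_{u'} \triangleleft G_{u'}$ at every vertex~$u'$ on level~$|u|$ (well defined, independently of the transporter, because $N$ is normal in~$G_u$). Define
\[
  \tilde N \;=\; \{\, g \in \st_G(|u|) : \varphi_{u'}(g) \in N_{u'} \text{ for every vertex } u' \text{ at level } |u|\,\},
\]
which is normal in~$G$ by a direct computation tracking how conjugation in~$G$ permutes the sections across level~$|u|$ while simultaneously conjugating each by the corresponding restriction of the conjugator. The essential point is non-triviality of~$\tilde N$. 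Since $G$ is weakly branch, $R := \varphi_u(\text{Rist}_G(u))$ is a non-trivial normal subgroup of~$G_u$ that itself acts as a weakly branch group on the subtree~$T_u$. A standard support argument (if $c$ centralises $r \in \text{Rist}_G(v)$ then $r$ has support in both $T_v$ and $T_{c(v)}$, forcing $c(v) = v$) shows that the centraliser in $\Aut T_u$ of any weakly branch subgroup is trivial; hence $N$ cannot centralise~$R$, so $[N, R] \neq 1$. Since $N$ and $R$ are both normal in~$G_u$, $[N, R] \leq N \cap R$, and therefore $N \cap R \neq 1$. Any non-trivial element of $N \cap R$ lifts to some $1 \neq r \in \text{Rist}_G(u)$, and such~$r$ acts trivially off~$T_u$, hence lies in~$\tilde N$.

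Prodenseness of~$H$ now gives $G = \tilde N H$. Lifting any $g \in G_u$ to $\hat g \in \st_G(u)$ and writing $\hat g = \tilde n h$ with $\tilde n \in \tilde N \subseteq \st_G(u)$ automatically forces $h \in \st_H(u)$; applying~$\varphi_u$ produces $g = \varphi_u(\tilde n)\varphi_u(h) \in N \cdot H_u$, completing the first statement.

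For the second statement, the quotient hypothesis combined with the equivalence recalled just before the proposition forces~$M$ to be prodense in~$G$, so the first statement yields that~$M_u$ is prodense in~$G_u$. To upgrade this to maximality one must verify both that $M_u \lneq G_u$ and that no~$L$ with $M_u \lneq L \lneq G_u$ exists. For the latter, lift such an~$L$ to $\hat L := \st_G(u) \cap \varphi_u^{-1}(L)$ and pick $\ell \in \hat L$ with $\varphi_u(\ell) \notin M_u$; then $\ell \notin M$, and the maximality of~$M$ yields $\langle M, \ell \rangle = G$. The main obstacle is to conclude $L = G_u$ from this equality, given $\ell \in \st_G(u)$ and $L \lneq G_u$: this is the technical heart of the proof and requires a careful use of the weakly branch hypothesis to disentangle the action on~$T_u$ from the rest of the tree, so that an arbitrary word in $M \cup \{\ell\}$ representing an element of~$\st_G(u)$ can be rearranged, modulo elements acting trivially on~$T_u$, to project into $\langle M_u, \varphi_u(\ell) \rangle = L$, whence $L = G_u$, contradicting $L \lneq G_u$.
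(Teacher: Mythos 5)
The paper does not prove Proposition~\ref{3.3.3} at all: it is imported verbatim from the cited work of Francoeur, so your attempt can only be measured against that reference and against what the present paper needs from it. Your proof of the first assertion (sections of prodense subgroups are prodense) is correct and is essentially the standard argument: transport $N$ to every vertex of level $|u|$, form the normal subgroup $\tilde N$, and produce a non-trivial element of it inside $\rist_G(u)$ via the trivial-centraliser trick. One small inaccuracy: you claim $R=\varphi_u(\rist_G(u))$ "acts as a weakly branch group" on $T_u$, but spherical transitivity of $R$ on $T_u$ is neither verified nor needed; what your support argument actually uses, and what is true, is that every rigid stabiliser of $R$ in $T_u$ is non-trivial (it contains $\varphi_u(\rist_G(uv))$), which already gives $C_{\Aut T_u}(R)=1$, hence $[N,R]\neq 1$ and $N\cap R\neq 1$ as you say.

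The "furthermore" statement, however, is not proved. You reduce it to showing (i) $M_u\lneq G_u$ and (ii) that there is no $L$ with $M_u\lneq L\lneq G_u$, and then you never address (i) and leave (ii) as an admitted "main obstacle" supported only by a heuristic. The heuristic is moreover doubtful as stated: from $\langle M,\ell\rangle=G$ one cannot pass to sections at $u$ in the way you suggest, because the elements of $M$ occurring in a word that represents an element of $\st_G(u)$ need not themselves stabilise $u$, so $\varphi_u$ of such a word involves sections of elements of $M$ at various other vertices, which are not elements of $M_u$; no rearrangement "modulo elements acting trivially on $T_u$" is available without substantial additional input. Note also that the hypothesis on proper quotients enters your argument only to guarantee that $M$ is prodense, whereas in the cited theorem it is needed precisely in the proof that $M_u$ is proper and maximal; and this properness ($M_u\lneq G_u$ for every vertex $u$) is exactly what Theorem~\ref{last} of the paper uses to obtain its contradiction. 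So the half you omit is the technically hard half of the cited result and the one the paper actually depends on; as it stands, your proposal establishes only the first assertion of the proposition.
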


Note that every proper quotient of a branch GGS-group has maximal subgroups only of finite index, since proper quotients of branch groups are virtually abelian; cf.~\cite[Prop.~2.22]{Francoeur-paper}. This, and Proposition~\ref{pro:virtually-nilpotent}, allows us to use the above proposition to show that non-torsion GGS-groups~$G$ do not possess proper prodense subgroups: for a prodense subgroup $M$ of $G$, we will show that there exists a vertex $u$ of $T$ such that $M_u=G$. This then shows that $M$ is not proper.

For $G$ a  non-torsion GGS-group and $M$ a prodense subgroup of $G$, it remains to show that there exists a vertex $u$ of $T$ such that $M_u=G$. The following result was proved in a more general setting in~\cite{AKT}, but it was stated for \emph{dense} instead of prodense subgroups~$M$. However, the proof, and hence the result, still holds for prodense subgroups.

\begin{proposition}\label{proposition: second}\cite[Prop.~5.4]{AKT}
  Let $G= \langle a,b\rangle$ be a  GGS-group that is not conjugate to a generalised Fabrykowski-Gupta group and let $M$ be a prodense subgroup of~$G$.  If $b\in M$, 
  then there
  exists a vertex~$u$ of~$T$
  such that $M_u=G$.
\end{proposition}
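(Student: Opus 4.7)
The plan is to locate a vertex $u$ where the section subgroup $M_u$ contains both generators $a$ and $b$ of $G$, forcing $M_u = G$. Throughout, Proposition~\ref{3.3.3} is invoked: $M_v$ is prodense in $G_v = G$ for every vertex $v$. From $b \in M$ and $\psi(b) = (a^{e_1}, \ldots, a^{e_{p-1}}, b)$, I read off that $M_{u_p}$ contains $b$, and $M_{u_i}$ contains $\langle a \rangle$ for every $i \in \{1, \ldots, p-1\}$ with $e_i \ne 0$. The non-Fabrykowski-Gupta hypothesis guarantees at least two such indices $i$, a fact that will prove essential. The candidate vertex $u$ will lie on the ``rightmost ray'' $u_p, u_p u_p, u_p u_p u_p, \ldots$, along which $b$ propagates unchanged.

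Next, I use prodensity to import an element of $M$ acting as $a$ at the root. The relevant normal subgroup is $\gamma_3(G)$ (or $K'$ when $G = \mathcal{G}$), which is nontrivial in a (weakly) regular branch GGS-group, so $M \cdot \gamma_3(G) = G$ yields some $m = ag \in M$ with $g \in \gamma_3(G)$. Writing $m^k = a^k g^{(k)}$ with $g^{(k)} \in \gamma_3(G) \cap \st_G(1)$ (a product of conjugates of $g$ by powers of $a$), I compute $b^{m^k} = (g^{(k)})^{-1} b^{a^k} g^{(k)} \in M \cap \st_G(1)$, and then $\psi(b^{m^k})_p = (a^{e_{p-k}})^{\psi(g^{(k)})_p}$. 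The branch inclusion $\psi(\gamma_3(G)) \le \gamma_3(G) \times \cdots \times \gamma_3(G)$ forces $\psi(g^{(k)})_p \in \gamma_3(G)$, so $\psi(b^{m^k})_p$ lies in the coset $a^{e_{p-k}} \gamma_4(G)$. Since at least one $e_{p-k}$ is non-zero, $M_{u_p}$ contains some element of $a^c \gamma_4(G)$ with $c \in \mathbb{F}_p^*$. Iterating this descent along the rightmost ray, at each stage pushing the commutator noise from $\gamma_n(G)$ into $\gamma_{n+1}(G)$, after finitely many steps the noise lands in a term of the lower central series whose sections at a sufficiently deep vertex $u$ are trivial, combining with $b$ (inherited through the ray) to give $M_u = G$.

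The principal obstacle is ensuring that the iterative descent terminates at a finite vertex $u$. The lower central series of an infinite branch GGS-group does not collapse to the identity, so the noise never vanishes in $G$ itself; however, the branch property guarantees that at sufficient depth in the tree the relevant commutators fall below the section being taken, and the section at $u$ then recovers $a$ exactly. Lemma~\ref{lem:derived-product} (constraining the product of sections of a derived element to lie in $G'$) and the shortening estimates of Lemma~\ref{shortening} will provide the bookkeeping required to make this descent precise. For the case $G = \mathcal{G}$, the analogous argument uses the weakly regular branch structure over $K'$ together with Proposition~\ref{pro:virtually-nilpotent}.
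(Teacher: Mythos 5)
The paper itself does not prove this proposition; it is imported wholesale from \cite[Prop.~5.4]{AKT}, with the remark that the argument there works verbatim for prodense subgroups. Your sketch cannot serve as a substitute, because its central step is false. You appeal to ``the branch inclusion $\psi(\gamma_3(G))\le \gamma_3(G)\times\overset{p}{\cdots}\times\gamma_3(G)$'', but the regular branch property is the \emph{opposite} containment, $\gamma_3(G)\times\overset{p}{\cdots}\times\gamma_3(G)\le\psi(\gamma_3(G)\cap\st_G(1))$, and the inclusion you actually use is simply not true: first-level sections of elements of $\gamma_3(G)$ need lie neither in $\gamma_3(G)$ nor even in $G'$. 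For instance, $[[a,b],a]\in\gamma_3(G)\cap\st_G(1)$ has first-level sections of the form $\varphi_u([a,b])^{-1}\varphi_{u'}([a,b])$ for adjacent indices $u,u'$, and by the formula for $\psi([a,b])$ in the proof of Lemma~\ref{lem:derived-product} some of these have non-zero $b$-exponent (a word such as $a^{e_1-e_2}b^{-1}a^{e_1}$), hence lie outside $G'$, let alone $\gamma_3(G)$. Consequently, from $m=ag$ with $g\in\gamma_3(G)$ all you may conclude is that $\varphi_p(b^{m^k})=(a^{e_{p-k}})^{\varphi_p(g^{(k)})}$ with $\varphi_p(g^{(k)})$ an \emph{arbitrary} element of $G$ (self-similarity), i.e.\ $\varphi_p(b^{m^k})\in a^{e_{p-k}}G'$ and nothing finer; the coset $a^{e_{p-k}}\gamma_4(G)$, on which your entire descent is predicated, is not obtained.

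Even if that step were repaired, the descent has no termination mechanism. Since $G$ is weakly branch it has trivial centre, hence is not nilpotent, so $\gamma_n(G)\ne 1$ for every $n$; in the branch case each $\gamma_n(G)$ even has finite index ($G$ is just infinite), and sections of finite-index subgroups are again of finite index because $G$ is fractal, so there is no depth at which ``the noise falls below the section being taken''. Lemma~\ref{shortening} contracts the length of a \emph{fixed} element along sections, whereas your construction introduces a fresh, uncontrolled conjugator $g^{(k)}$ (from a new application of prodensity) at every stage, so neither Lemma~\ref{shortening} nor Lemma~\ref{lem:derived-product} supplies the promised bookkeeping. Note also that the hypothesis that $G$ is not conjugate to a generalised Fabrykowski--Gupta group enters your argument only through the unused remark that two entries of $\mathbf{e}$ are non-zero; since with a single non-zero entry the statement is true but needs a genuinely different and harder proof (\cite[Thm.~7.2.7]{Francoeur}), an argument that never exploits this hypothesis cannot be complete. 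If you want to salvage the strategy, you need to run prodensity against a normal subgroup whose sections are controlled --- for example $N=\st_G(1)'$ satisfies $\varphi_u(N)\le G'$ for every $u\in X$ --- rather than against $\gamma_3(G)$, whose sections are not; this, combined with a genuine use of the two non-zero entries, is the kind of argument carried out in \cite{AKT}.
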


It follows from the above result that it suffices to show that $b\in M_u$ for $M$ a prodense subgroup of~$G$ and $u$~a vertex of~$T$. This will be done in the next section.


\section{Obtaining \texorpdfstring{$b$}{b}.}

For $G=\langle a,b\rangle$ a non-torsion GGS-group with defining vector $\mathbf{e}=(e_1,\ldots,e_{p-1})$, we write $\lambda:=\sum_{i=1}^{p-1} e_i\ne 0$. Further, we write $g\equiv_{G'} h$ when $gh^{-1}\in G'$, for elements $g,h\in G$.

\begin{lemma}\label{general-split-cases}
Let $G=\langle a,b\rangle$ be a non-torsion GGS-group and let $g\in G$ be such that $g\equiv_{G'} b^{t}$ for some $t\ne 0$. For  $u\in X$, write $\varphi_u(g)\equiv_{G'}a^{n_u}b^{m_u}$ for some $n_u,m_u\in \mathbb{F}_p$.  Then, exactly one of the following cases is true.
\begin{enumerate}
\item 
There exists $u\in X$ such that $m_u\ne 0$ but $n_u=0$.
\item For all $u\in X$, if $m_u\ne 0$, then $n_u\ne 0$. Furthermore, there exist at least two vertices $u_1,u_2\in X$ such that $n_{u_k}\ne \lambda m_{u_k}$  for $k\in\{1,2\}$, and there exist at least two vertices $v_1,v_2\in X$ such that $m_{v_1},m_{v_2}\ne 0$.
\end{enumerate}
\end{lemma}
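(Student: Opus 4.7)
The plan is to first extract the precise linear relations satisfied by the tuples $((n_u, m_u))_{u\in X}$ for any $g\in\st_G(1)$, and then apply these relations to the dichotomy and to the two ``furthermore'' conditions of case~(2).

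First I would use that $\st_G(1)$ has index $p$ in $G$ to invoke Schreier's procedure: with coset representatives $\{1, a, \dots, a^{p-1}\}$, this yields $\st_G(1) = \langle b^{a^i} : 0\leq i\leq p-1\rangle$. A direct computation shows that $\psi(b^{a^i})$ is the cyclic shift by $i$ of $(a^{e_1},\dots,a^{e_{p-1}},b)$. Writing $g \equiv_{G'} \prod_i (b^{a^i})^{c_i}$ modulo the kernel of the natural map $g\mapsto (\varphi_u(g) G')_{u\in X}$, I would then read off
\[
  m_u = c_u, \qquad n_u = \sum_{j=1}^{p-1} e_j\, m_{u-j \bmod p} \qquad (u\in X),
\]
where $X$ is identified with $\mathbb{Z}/p\mathbb{Z}$ via $p\mapsto 0$. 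The hypothesis $g\equiv_{G'}b^t$ further forces $\sum_u m_u = t$, whence $\sum_u n_u = t\lambda$ (also obtained from Lemma~\ref{lem:derived-product} applied to $gb^{-t}\in G'$). The first observation is that case~(1) is the literal negation of the first assertion of case~(2), so exactly one of these holds; it then remains to verify the two ``furthermore'' conditions under the assumption that $m_u\neq 0 \Rightarrow n_u\neq 0$ for every $u\in X$.

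For ``at least two $v\in X$ with $m_v\neq 0$'': if a unique $v_0$ had $m_{v_0}\neq 0$, then $m_{v_0}=t$, and the linear relation at $u = v_0$ would yield $n_{v_0} = 0$, since no $j\in\{1,\ldots,p-1\}$ satisfies $v_0-j\equiv v_0\bmod p$. This contradicts case~(2)'s first part at $v_0$, so at least two such vertices exist.

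For ``at least two $u$ with $n_u\neq \lambda m_u$'': the identity $\sum_u(n_u-\lambda m_u) = t\lambda - \lambda t = 0$ shows the count of nonzero values is $0$ or at least $2$, so it suffices to preclude $n_u=\lambda m_u$ for every $u$. Combined with the linear relation, this assumption would be equivalent to $C(x)\, m(x) = 0$ in the local ring $R = \mathbb{F}_p[x]/(x^p-1) = \mathbb{F}_p[x]/(x-1)^p$, where $m(x) = \sum_u m_u x^u$ and $C(x) = \sum_{j=1}^{p-1} e_j x^j - \lambda$. Since $C(1)=0$ and, as $\mathbf{e}\neq \mathbf{0}$, $C$ is a nonzero polynomial of degree at most $p-1$, one may write $C(x) = (x-1)^k E(x)$ with $1\le k\le p-1$ and $E(1)\neq 0$. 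Because $E$ is a unit in $R$, the identity becomes $(x-1)^k m(x) = 0$, whence $m(x)\in (x-1)^{p-k}\mathbb{F}_p[x]$, so $m(1) = \sum_u m_u = 0$, contradicting $t\neq 0$. The main obstacle will be this final polynomial-ring argument; everything else is routine linear algebra once the linear structure of the $(n_u, m_u)$-tuples is in hand.
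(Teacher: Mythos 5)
Your proposal is correct, and structurally it is the same proof as the paper's: you derive the identical linear relations $m_u=c_u$, $n_u=\sum_{j=1}^{p-1}e_j m_{u-j}$ with $\sum_u m_u=t$ and $\sum_u n_u=\lambda t$, observe that Case~1 is the negation of the first clause of Case~2, rule out ``exactly one $u$ with $n_u\neq\lambda m_u$'' via $\sum_u(n_u-\lambda m_u)=0$, and reach the contradiction $\sum_u m_u=0\neq t$ when $n_u=\lambda m_u$ for all $u$; the ``two vertices with $m_v\neq 0$'' step is the same contrapositive of equation $n_u=\sum_j e_j m_{u-j}$. The one point of divergence is the key non-degeneracy step: the paper phrases the relations as orthogonality of all cyclic shifts of $(m_p,\dots,m_1)$ to $(e_1,\dots,e_{p-1},-\lambda)$ and cites the circulant-matrix rank criterion of \cite[Lem.~2.7(i)]{FAZR2} (rank $<p$ iff $\sum_i m_i=0$), whereas you reprove this fact directly by working in the local ring $\mathbb{F}_p[x]/(x^p-1)=\mathbb{F}_p[x]/(x-1)^p$, factoring $C(x)=(x-1)^kE(x)$ with $E$ a unit and computing the annihilator of $(x-1)^k$; this is exactly the standard proof of the cited criterion, so your argument is self-contained but not genuinely different in substance. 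One small point of care: writing $g\equiv\prod_i(b^{a^i})^{c_i}$ only modulo the kernel of $g\mapsto(\varphi_u(g)G')_u$ does not by itself give $\sum_i c_i=t$ unless you add the (true, but unproved) remark that this kernel lies in $G'$; the clean fixes are either to write $g$ literally as a word in the generators $b^{a^i}$ of $\st_G(1)$ (as the paper does), or to note that your parenthetical application of Lemma~\ref{lem:derived-product} to $gb^{-t}\in G'$ already yields both $\sum_u m_u=t$ and $\sum_u n_u=\lambda t$. This is a phrasing issue, not a gap in the argument.
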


\begin{proof}
As $g\equiv_{{G}'} b^{t}$, it follows that 
\[
g=(b^{j_1})^{a^{l_1}}(b^{j_2})^{a^{l_2}}\cdots (b^{j_n})^{a^{l_n}}
\]
for some $n\in \mathbb{N}$, $l_1,\ldots,l_n\in \mathbb{F}_p$ with $l_k\ne l_{k+1}$ for $1\le k\le n-1$, and $j_1,\ldots,j_n\in \mathbb{F}_p^*$ with $\sum_{k=1}^n j_k= t$  in $\mathbb{F}_p$.

Now, for $u\in X$, we have $\varphi_u(g) = \varphi_u(b^{a^{l_1}})^{j_1}\varphi_u(b^{a^{l_2}})^{j_2}\cdots \varphi_u(b^{a^{l_n}})^{j_n}$, and since
\[\varphi_u(b^{a^{l}}) =
\begin{cases}
a^{e_{u-l}} & \text{ if } l\ne u, \\
b & \text{ if } l=u,
\end{cases}\]
we have
$\varphi_u(g)\equiv_{{G}'} a^{n_u}b^{m_u}$ with
\[
m_u= \sum j_k
\]
where the sum is taken over all $k$ such that $l_k=u$, and
\begin{equation}\label{eq:n-u}
n_u=\sum_{\substack{i=0 \\ i\ne u}}^{p-1} e_{u-i}\,m_i = \sum_{j=1}^{p-1} e_j\,m_{u-j}.
\end{equation}

It is clear that Cases 1 and 2 are mutually exclusive. Thus, it suffices to show that if Case 1 does not hold, then Case 2 does. So we assume now that Case 1 does not hold.
Suppose that $n_u = \lambda m_u$ for all $u\in X$. Then
\[
 \sum_{j=1}^{p-1} e_j\,m_{u-j} -\lambda m_u=0 \quad\text{for all $u\in X$}.
 \]
 This is equivalent to 
\[
\left(\begin{array}{cccc}
e_1 & \cdots & e_{p-1} &-\lambda
\end{array}\right)
\cdot \left(\begin{array}{c}
m_{p}\\
m_{p-1}\\
\vdots \\
m_{2}\\
m_1
\end{array}\right) =\left(\begin{array}{cccc}
e_1 & \cdots & e_{p-1} &-\lambda
\end{array}\right)
\cdot \left(\begin{array}{c}
m_{p-1}\\
m_{p-2}\\
\vdots \\
m_{1}\\
m_{p}
\end{array}\right) =\cdots 
\]
\[
\cdots = \left(\begin{array}{cccc}
e_1 & \cdots & e_{p-1} &-\lambda
\end{array}\right)
\cdot \left(\begin{array}{c}
m_{1}\\
m_{p}\\
\vdots \\
m_{3}\\
m_{2}
\end{array}\right)= 0.
\]
In other words,
\begin{equation}\label{circulant}
 \left(\begin{array}{c}
m_{p}\\
m_{p-1}\\
\vdots \\
m_{2}\\
m_1
\end{array}\right),  \left(\begin{array}{c}
m_{p-1}\\
m_{p-2}\\
\vdots \\
m_1\\
m_{p}
\end{array}\right), \ldots, \left(\begin{array}{c}
m_{1}\\
m_{p}\\
\vdots \\
m_{3}\\
m_{2}
\end{array}\right)
\end{equation}
are all in the subspace orthogonal to $(e_1 , \ldots, e_{p-1} ,-\lambda)$. However the vectors in (\ref{circulant}) form the rows of a circulant matrix. The rank of this circulant matrix is less than $p$ if and only if $\sum_{i=1}^p m_i = 0$; compare \cite[Lem.~2.7(i)]{FAZR2}. However $\sum_{i=1}^p m_i = \sum_{k=1}^n j_k= t\ne 0$. Thus, there is at least one vertex $u_1\in X$ such that $n_{u_1}\ne \lambda m_{u_1}$.

Suppose that there is only one such $u_1$. Then $n_u= \lambda m_u$ for all $u\ne u_1$.
Notice that we have 
\begin{align*}
\sum_{u\in X}n_u &= \sum_{u\in X}\sum_{j=1}^{p-1}e_jm_{u-j} = \sum_{j=1}^{p-1}\sum_{u\in X}e_jm_{u-j}= \sum_{j=1}^{p-1} e_j \sum_{u\in X} m_u=\lambda \sum_{u\in X} m_u.
\end{align*}
Hence, we have
\begin{align*}
n_{u_1}+\lambda \sum_{u\ne u_1} m_u&= \lambda\sum_{u\in X}m_u,
\end{align*}
which yields 
$n_{u_1} = \lambda m_{u_1}$; a contradiction.

For the final statement, clearly $m_u\ne 0$ for at least one $u\in X$, since $\sum_{u\in X} m_u= t\ne 0$. Since we are not in Case 1, it follows that $n_u\ne 0$. From (\ref{eq:n-u}), the result follows.
\end{proof}

\begin{lemma}\label{lemma:SectionLessThanHalf}
 For $G$ a non-torsion GGS-group, let  $x\in G$ be such that $x\equiv_{G'} b^t$ for some $t\ne 0$, and 
write $\psi(x) = (x_1, x_2, \dots, x_p)$. If $x$ is in Case 2 of Lemma~\ref{general-split-cases} and if $|x|=2\mu$ for some $\mu\in \mathbb{N}$, then  there exists $u\in X$ such that  $x_u\ne 1$ and $|x_u|<\mu=\frac{|x|}{2}$.
\end{lemma}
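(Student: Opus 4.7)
The plan is to argue by contradiction, assuming that for every $u \in X$, either $x_u = 1$ or $|x_u| \ge \mu$. Since $|x| = 2\mu$, Lemma~\ref{shortening} yields $|x_u| \le (2\mu+1)/2$, hence $|x_u| \le \mu$ as $|x_u|$ is a non-negative integer. Combined with the contradiction hypothesis, every non-trivial section has length exactly $\mu$. Case~2 of Lemma~\ref{general-split-cases} provides two distinct vertices $v_1, v_2 \in X$ with $m_{v_1}, m_{v_2} \ne 0$, so in particular $x_{v_1}, x_{v_2} \ne 1$. The inequality $\sum_{u \in X} |x_u| \le |x| = 2\mu$ from Lemma~\ref{shortening} then forces at most two non-trivial sections; therefore exactly two, namely $v_1$ and $v_2$, and $x_u = 1$ for every $u \in X \setminus \{v_1, v_2\}$.

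Since $x_u = 1$ implies $n_u = m_u = 0$, the formula $n_u = \sum_{j=1}^{p-1} e_j m_{u-j}$ from the proof of Lemma~\ref{general-split-cases} collapses, for each $u \in X \setminus \{v_1, v_2\}$, to the single relation $e_{u - v_1} m_{v_1} + e_{u - v_2} m_{v_2} = 0$, because $m_{u-j}$ vanishes unless $u - j \in \{v_1, v_2\}$. Setting $d = v_1 - v_2 \in \mathbb{F}_p^*$ and $\beta = -m_{v_2}/m_{v_1}$, these $p-2$ equations become the shift recurrence $e_j = \beta\, e_{j+d}$, valid for every $j \in \{1, \ldots, p-1\} \setminus \{-d\}$.

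Because $p$ is prime, iterating this recurrence determines $e_{kd} = \beta^{p-1-k} e_{-d}$ for every $k = 1, \ldots, p-1$, and summing gives $\lambda = \sum_{j=1}^{p-1} e_j = e_{-d} \sum_{s=0}^{p-2} \beta^s$. If $\beta \ne 1$, then $\beta \ne 0$ (as $m_{v_1}, m_{v_2} \ne 0$) and Fermat's little theorem gives $\beta^{p-1} = 1$, so the geometric sum vanishes, forcing $\lambda = 0$ and contradicting that $G$ is non-torsion. If instead $\beta = 1$, then $m_{v_2} = -m_{v_1}$, whereas $\sum_{u \in X} m_u = m_{v_1} + m_{v_2} = t \ne 0$ (since $m_u = 0$ for all other $u$ and $\sum_u m_u = t$ from the proof of Lemma~\ref{general-split-cases}), yielding another contradiction. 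I expect the main subtlety to lie in the modular bookkeeping that ensures the two ``missing'' equations (corresponding to $u = v_1$ and $u = v_2$) do not obstruct the recurrence from cycling through the entire orbit of addition by $d$; once this is in place the geometric-sum/Fermat split handles the algebra cleanly.
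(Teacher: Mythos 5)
Your proof is correct and follows essentially the same route as the paper: reduce, under the contradiction hypothesis, to exactly two non-trivial sections of length $\mu$ each, extract the linear relations $e_{u-v_1}m_{v_1}+e_{u-v_2}m_{v_2}=0$ for $u\notin\{v_1,v_2\}$, and sum the resulting geometric progression in the $e_j$ to force $\lambda=0$, with the case $\beta=1$ ruled out by $m_{v_1}+m_{v_2}=t\ne 0$ exactly as the paper rules out $r=1$ via $i+j=t\ne0$. The only cosmetic difference is that you derive these relations from the abelianised formula \eqref{eq:n-u} together with $x_u=1\Rightarrow n_u=m_u=0$, whereas the paper first writes $x$ in alternating syllable form and computes the sections explicitly as powers of $a$; both yield the same equations.
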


\begin{proof}
Suppose for the sake of contradiction that this is not the case.
By Lemma~\ref{shortening} and using the fact that $x$ is in Case 2 of Lemma~\ref{general-split-cases}, there exist two vertices $v_1,v_2\in X$ such that $|x_{v_1}|=|x_{v_2}|=\frac{|x|}{2}$ and $x_u=1$ for all $u\in X\backslash \{v_1,v_2\}$. This means that
\[
x=\left(b^{i_1}\right)^{a^{v_1}}\left(b^{j_1}\right)^{a^{v_2}}\cdots \left(b^{i_{\mu}}\right)^{a^{v_1}}\left(b^{j_\mu}\right)^{a^{v_2}}
\]
for some $i_1,\dots, i_{\mu}, j_1, \dots, j_{\mu}\in \mathbb{F}_p$. Since the result is true for $x$ if and only if it is true for $x^{a^{-v_1}}$, we can suppose without loss of generality that
\[
x=b^{i_1}\left(b^{j_1}\right)^{a^{v}}\cdots b^{i_{\mu}}\left(b^{j_\mu}\right)^{a^{v}}
\]
for some $v\in X$. Let us set $i=\sum_{k=1}^{\mu}i_k$ and $j=\sum_{k=1}^{\mu}j_k$. Notice that $i+j=t$. Furthermore, since $x$ is in Case 2 of Lemma~\ref{general-split-cases}, there must exist two vertices with non-zero powers of $b$, and our assumptions then imply that $i$ and $j$ are both non-zero.

For all $u\in X\backslash\{p,v\}$, the elements $\varphi_u(b)$ and $\varphi_u(b^{a^v})$ are both powers of $a$, and thus commute. Therefore, for $u\in X\backslash\{p,v\}$, we have
\[
x_u=\varphi_u(x)=\varphi_u(b^i(b^j)^{a^v})=a^{ie_u + je_{u-v}}.
\]

Since
 $x_u=1$ for all $u\in X\backslash \{p,v\}$, we have $ie_u+je_{u-v}=0$ for all $u\in X\backslash \{p,v\}$. In other words, for all $2\leq k \leq p-1$, we have $ie_{kv}+je_{(k-1)v}=0$. By induction, we see that for all $1\leq k \leq p-1$, we have $e_{kv} = r^{k-1}e_{v}$, where $r=-\frac{j}{i}$, which is well-defined, since $i\ne 0$. Notice that $r\ne 0$, since $j\ne 0$, and $r\ne 1$, since $i+j=t\ne 0$. Therefore, we have
\[\lambda = \sum_{k=1}^{p-1}e_k = \sum_{k=1}^{p-1}e_{kv} = \sum_{k=1}^{p-1}r^{k-1}e_v = e_v\sum_{k=0}^{p-2}r^k = e_v\frac{r^{p-1}-1}{r-1} =0,\]
a contradiction. The conclusion follows.
\end{proof}

\begin{lemma}\label{lem:propagates}
Let $G=\langle a,b\rangle$ be a non-torsion GGS-group with $\lambda=\sum_{i=1}^{p-1}e_i$. 
Let $g\in G$ be such that $g\equiv_{G'} a^ib^j$ with $i\ne 0$ and $j\in \mathbb{F}_p$, and write $g=a^i\cdot \psi^{-1}((g_1,g_2,\ldots,g_p))$. Then, for all $u\in X$, we have $\varphi_u(g^p)\equiv_{G'} a^{\lambda j}b^j$, with $|\varphi_u(g^p)|\le \sum_{k=1}^p|g_k|\le |g|$.
\end{lemma}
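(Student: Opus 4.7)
The plan is to compute $g^p$ by pulling factors of $a^i$ to the left. Writing $h = g \cdot a^{-i} \in \st_G(1)$, so that $\psi(h) = (g_1, \ldots, g_p)$, a telescoping calculation gives
\[
g^p = (a^i h)^p = a^{ip} \cdot h^{a^{(p-1)i}} h^{a^{(p-2)i}} \cdots h^{a^i} h = h^{a^{(p-1)i}} \cdots h^{a^i} h,
\]
since $a^{ip} = 1$; in particular $g^p \in \st_G(1)$. Because rooted automorphisms have trivial sections, conjugation by $a^{ki}$ just cyclically relabels the sections and gives $\varphi_u(h^{a^{ki}}) = g_{u - ki}$, with indices reduced mod~$p$. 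Hence
\[
\varphi_u(g^p) = g_{u - (p-1)i} \, g_{u - (p-2)i} \cdots g_{u - i} \, g_u.
\]
The hypothesis $i \ne 0$ is essential here: it ensures that the indices $u - ki$ for $k = 0, 1, \ldots, p-1$ run through all of $X$, so each of $g_1, \ldots, g_p$ appears exactly once in this product.

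The length bound is then immediate. Subadditivity~\eqref{eq:product-g1-g2} yields $|\varphi_u(g^p)| \le \sum_{k=1}^p |g_k|$, and Lemma~\ref{shortening} applied to $h \in \st_G(1)$ gives $\sum_{k=1}^p |g_k| \le |h|$. Finally, $|h| = |g \cdot a^{-i}| \le |g|$ because $|a^{-i}| = 0$.

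For the congruence modulo $G'$, the abelianisation $G/G'$ is commutative, so the order of the factors is irrelevant and $\varphi_u(g^p) \equiv_{G'} \prod_{k=1}^p g_k$. Since $g \equiv_{G'} a^i b^j$, we get $h \equiv_{G'} b^j$, so $h = c b^j$ for some $c \in G' \cap \st_G(1)$. Writing $\psi(c) = (c_1, \ldots, c_p)$ and recalling that $\psi(b^j) = (a^{je_1}, \ldots, a^{je_{p-1}}, b^j)$, we read off $g_k = c_k a^{je_k}$ for $1 \le k \le p-1$ and $g_p = c_p b^j$. Multiplying in $G/G'$ then gives
\[
\prod_{k=1}^p g_k \equiv_{G'} \Bigl(\prod_{k=1}^p c_k\Bigr) \cdot a^{\lambda j} b^j \equiv_{G'} a^{\lambda j} b^j,
\]
where the final step invokes Lemma~\ref{lem:derived-product} applied to $c \in G'$ to conclude that $\prod_{k} c_k \in G'$. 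The main bookkeeping obstacle is verifying the cyclic relabelling formula for sections under conjugation by $a^{ki}$ and checking that, thanks to $i \ne 0$, it produces a genuine permutation of $(g_1, \ldots, g_p)$; once this is in place, the length bound and the mod-$G'$ calculation are essentially mechanical.
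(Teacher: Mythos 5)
Your proposal is correct and, unlike the paper, self-contained: the paper's proof of this lemma is simply a pointer to \cite[Lem.~7.2.2]{Francoeur}, and your computation --- expand $(a^i h)^p$ with $h\in\st_G(1)$, note that $a^{ip}=1$ and that, because $i\ne 0$, each section $\varphi_u(g^p)$ is a product in which every $g_k$ occurs exactly once, then get the length bound from subadditivity~\eqref{eq:product-g1-g2} together with Lemma~\ref{shortening}, and the congruence from $\psi(b^j)=(a^{je_1},\dots,a^{je_{p-1}},b^j)$ together with Lemma~\ref{lem:derived-product} --- is precisely the standard argument that this citation stands for, so there is no genuine divergence of method, only a difference in how much is written out.

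One small bookkeeping slip: having set $h=g\cdot a^{-i}$, you then expand $g^p=(a^ih)^p$, which is only consistent if in fact $h=a^{-i}g$; the latter is also the decomposition fixed in the statement, since $g=a^i\cdot\psi^{-1}((g_1,\dots,g_p))$ gives $\psi^{-1}((g_1,\dots,g_p))=a^{-i}g$. The slip is harmless: replacing $h$ by a conjugate $h^{a^{\pm i}}$ merely permutes the tuple $(g_1,\dots,g_p)$ cyclically, and both $\sum_{k=1}^p|g_k|$ and the class of $g_1g_2\cdots g_p$ modulo $G'$ are invariant under such a permutation, so all three assertions of the lemma are unaffected; still, you should define $h=a^{-i}g$ to match the statement.
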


\begin{proof}
This follows as in \cite[Lem.~7.2.2]{Francoeur}.
\end{proof}

We will now  establish the following in several steps.

\begin{proposition}\label{proposition: first}
Let $G=\langle a,b\rangle$ be a non-torsion GGS-group, and let $H$ be a prodense subgroup. Then there exists $v_0\in X^*$ such that $b\in H_{v_0}$.
\end{proposition}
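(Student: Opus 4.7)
The plan is to proceed by strong induction on length, after reducing to a well-chosen base case. It suffices to produce a vertex $v_0\in X^*$ and an element $x\in H_{v_0}$ with $|x|=1$ and $x\equiv_{G'} b^t$ for some $t\in\mathbb{F}_p^*$: since $b^tG'\subseteq\st_G(1)$ (as $G/\st_G(1)\cong\mathbb{F}_p$ is generated by the image of $a$), such an $x$ must equal $(b^t)^{a^{-\alpha}}$ for some $\alpha\in\mathbb{F}_p$, and its section at a suitable level-one vertex~$v$ equals $b^t$, giving $b\in H_{v_0 v}$, as desired.

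Set
\[
\mu^* \;:=\; \min\bigl\{\,|x|\,:\,v\in X^*,\ x\in H_v,\ x\equiv_{G'} b^t\text{ for some }t\in\mathbb{F}_p^*\,\bigr\}.
\]
By Proposition~\ref{3.3.3} combined with fractalness of $G$, every $H_v$ is prodense in $G$, so every coset of $G'$ meets each $H_v$ and $\mu^*$ is a well-defined positive integer. Suppose for contradiction that $\mu^*\ge 2$ and pick $(v_0,x)$ attaining it; note that $x\in\st_G(1)$, so the sections $x_u\in H_{v_0u}$ are defined. Apply Lemma~\ref{general-split-cases}. In Case~(1), some $\varphi_u(x)\in H_{v_0u}$ satisfies $\varphi_u(x)\equiv_{G'} b^{j_0}$ with $j_0\ne 0$, and by Lemma~\ref{shortening} $|\varphi_u(x)|\le (|x|+1)/2<|x|=\mu^*$, contradicting the choice of $\mu^*$.

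In Case~(2), no section lies in a coset $b^{j_0}G'$ with $j_0\ne 0$. By Lemma~\ref{general-split-cases}, at least two indices $u_1,u_2\in X$ satisfy $m_{u_k}\ne 0$ (hence $n_{u_k}\ne 0$) and $n_{u_k}\ne\lambda m_{u_k}$. When $|x|$ is even, Lemma~\ref{lemma:SectionLessThanHalf} supplies some $u\in X$ with $\varphi_u(x)\ne 1$ and $|\varphi_u(x)|<|x|/2$; choosing $u\in\{u_1,u_2\}$ if need be ensures $m_u,n_u\ne 0$, and then Lemma~\ref{lem:propagates} applied to $\varphi_u(x)$ yields, for each $w\in X$, an element $\varphi_w(\varphi_u(x)^p)\in H_{v_0uw}$ of length at most $|\varphi_u(x)|$ lying in the coset $a^{\lambda m_u}b^{m_u}G'$. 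The odd-length case is treated analogously, using only the bound $|\varphi_u(x)|<|x|$ from Lemma~\ref{shortening} together with the Case~(2) data.

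The main obstacle is that the propagated elements have nonzero $a$-component $\lambda m_u$ (since $\lambda,m_u\ne 0$), so they do not directly contradict minimality of $\mu^*$, which is measured only within the cosets $b^tG'$. The plan to overcome this is to leverage the two distinct indices $u_1,u_2$ above: propagating each via Lemma~\ref{lem:propagates} produces two families of short elements with different exponent data at vertices below $v_0$, and combining them, via suitable products and further applications of Lemma~\ref{lem:propagates}, should cancel the $a$-component while preserving a nontrivial $b$-component, using both $\lambda\ne 0$ and the mismatch $n_{u_k}\ne\lambda m_{u_k}$. Keeping the length of the resulting element strictly below $\mu^*$ through this cancellation is the core technical difficulty on which the whole argument hinges.
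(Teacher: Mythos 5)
There is a genuine gap. Your setup is the same as the paper's up to the easy half: you take an element $x$ of minimal length $\mu^*$ lying over some vertex in a coset $b^tG'$ with $t\ne 0$, observe that $\mu^*=1$ finishes the proof, and dispose of Case~(1) of Lemma~\ref{general-split-cases} via Lemma~\ref{shortening}. But in Case~(2) you never actually produce a contradiction: your last paragraph explicitly defers the ``core technical difficulty'' of cancelling the $a$-part while keeping the length below $\mu^*$, and that difficulty is precisely the content of the proof, not a routine step. In the paper, handling Case~(2) occupies the bulk of Section~5: one introduces a second minimal element $y$ ranging over the cosets $a^{\lambda i}b^i G'$ ($i\ne 0$) and, via Lemma~\ref{lem:propagates}, arranges to have $y$ available at every vertex; one proves the key inequality $2|y|\le |x|$; one shows (Lemma~\ref{minimal-a}) that \emph{every} nontrivial element of every $H_v$ has length at least $|y|$; one introduces a third minimal element $w$ with $a$-exponent $n\ne 0$, $n\ne\lambda m$, and proves $|w|=|y|$ (Lemma~\ref{w}); and only then, by the half-length estimates \eqref{eq:half-length}--\eqref{eq:half-length2}, does one pin down the exact syllable structure of $y$ (the form \eqref{tight-form} with the two active coordinates satisfying $i_2=i_1\pm i$, via the auxiliary lemma on the sets $I_v$), and finally show this structure cannot propagate to both $(y^p)_{v_1}$ and $(y^p)_{v_2}$, giving a descent and the contradiction. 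None of this machinery, nor any substitute for it, appears in your proposal.

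Two further points where the sketch as written would already run into trouble. First, ``combining'' the propagated elements coming from $u_1$ and $u_2$ requires them to sit over the \emph{same} vertex; elements of $H_{v_0u_1w}$ and $H_{v_0u_2w'}$ cannot be multiplied, so you need the ``$y$ everywhere'' device of Lemma~\ref{lem:propagates} before any cancellation can even be attempted, and the naive bound $|y^{-k}w|\le|y|+|w|$ does not beat $\mu^*$ without the finer section-length bookkeeping above. Second, in your even-length step the vertex $u$ furnished by Lemma~\ref{lemma:SectionLessThanHalf} need not lie in $\{u_1,u_2\}$, so ``choosing $u\in\{u_1,u_2\}$ if need be'' is not available: the short section and the sections with the good exponent data may be at different coordinates, and reconciling these two pieces of information is again exactly what the paper's length analysis is for.
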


First, let us consider the set
\[
U=\Big\{g\in \bigsqcup_{v\in X^*} H_v \mid g\equiv_{G'} a^{ \lambda i}b^i \text{ for some }i\ne 0 \Big\}.
\]
Since $H$ is a prodense subgroup of $G$ and $G'$ is a non-trivial normal subgroup in $G$, the set~$U$ is non-empty. Let $y\in U$ be an element of minimal length in $U$. Let $v\in X^*$ be such that $y\in H_v$. By Proposition~\ref{3.3.3}, the subgroup~$H_v$ is  prodense in~$G_v=G$. Hence if we prove the above proposition for $H_v$, we will also have proved it for $H$. Thus, without loss of generality, we may assume that $v$ is the root of the tree~$X^*$. Further, in light of Lemma~\ref{lem:propagates}, we have ``$y$", that is, an element of~$U$ of minimal length, everywhere.

Now let us consider the set
\[
V=\Big\{g\in \bigsqcup_{v\in X^*} H_v \mid g\equiv_{G'} b^i \text{ for some }i\ne 0 \Big\},
\]
and let $x\in V$ be an element of minimal length in $V$. Likewise, such an element exists as $V$ is non-empty by the prodensity of $H$. Let $w\in X^*$ be such that $x\in H_w$. As before, we may assume that $w$ is the root of the tree. Hence, using the fact that we get $y$ everywhere, we have $y$ and $x$ at the root.

If $|x|=1$, then $x=a^{-j}b^ia^j$ with $i\ne 0$ and $j\in \{1,\ldots,p\}$. Thus, we have that $\varphi_j(x)=b^i$. As $i$ is invertible modulo $p$, we obtain $b\in H_j$.

We will now show that $|x|$ must be equal to 1. The case $|x|=0$ is not possible, so we assume that $|x|>1$. In this case, there cannot exist $u\in X$ such that $\varphi_u(x)\equiv_{G'} b^j$ with $j\ne 0$. Indeed, since $|\varphi_u(x)|<|x|$, this would contradict the minimality of~$x$; compare with Lemma~\ref{shortening}. Therefore according to Lemma~\ref{general-split-cases},  there exists $u\in X$ such that $\varphi_u(x)\equiv_{G'} a^{i_u}b^{j_u}$ with  $i_u\ne 0$ and $i_u\ne \lambda j_u$. For this $u\in X$ we write $x_u=\varphi_u(x)$. 

Furthermore, also by Lemma~\ref{general-split-cases}, there exist two vertices $u_1,u_2\in X$ such that $j_{u_1},j_{u_2}\ne 0$ with $i_{u_1},i_{u_2}\ne 0$.  Now by Lemma~\ref{lem:propagates}, we see that $\varphi_v(x_{u_i}^{\,p})\in U$ for $i\in\{1,2\}$ and $v\in X$, and thus
\[
|y|\le |\varphi_v(x_{u_i}^{\,p})|\le |x_{u_i}|.
\]
Hence, using $\sum_{u\in X}|x_u|\le |x|$, it follows that
\begin{equation}\label{eq:2y-x}
2|y|\le |x|.
\end{equation}

\begin{remark} \label{rem:no-a} We observe that if there is a $v\in X^*$ with $a^n\in H_v$ for some $n\ne 0$, then  writing $y\equiv_{G'} a^{\lambda j}b^j$ for some $j\ne 0$, the product of~$y$ with $a^{-\lambda j}$ yields
\[
|x|\le |y|,
\]
which contradicts (\ref{eq:2y-x}), and so we are done in this case. Hence, in what follows, we will assume freely without special mention that $a\not \in H_v$ for all $v\in X^*$. 
\end{remark}

\begin{lemma}\label{minimal-a}
In the set-up above, in particular, assuming that $|x|>1$, let $v\in X^*$ be any vertex and let $z\in H_v$. If $z\ne 1$, then $|z|\geq |y|$.
\end{lemma}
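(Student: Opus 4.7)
The plan is to proceed by strong induction on $|z|$, supposing for contradiction that a nontrivial element $z \in H_v$ with $|z| < |y|$ exists, and choosing such $z$ of minimum length. Remark~\ref{rem:no-a} already forbids nontrivial powers of $a$ in any $H_{v'}$, so $|z| \ge 1$. Classify $z$ by its image in $G/G' \cong \mathbb{F}_p^2$, writing $z \equiv_{G'} a^m b^n$ with $m, n \in \mathbb{F}_p$. Three of the four cases are handled directly, and the fourth is the main obstacle.

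If $m = n = 0$, so that $z \in G'$, then $z \in \st_G(1)$ and $|z| \ge 2$ (a $G'$-element of length at most $1$ must be trivial). Writing $\psi(z) = (z_1, \dots, z_p)$, Lemma~\ref{shortening} gives $|z_j| < |z|$ for each $j$, and the standard subtree argument places $z_j \in H_{vj}$; the minimality of $|z|$ among counterexamples then forces every $z_j = 1$, contradicting $z \ne 1$. If $n \ne 0$ and $m = 0$, then $z \in V$, so the minimality of $x$ together with the previously established $|x| \ge 2|y|$ yields $|z| \ge 2|y| > |y|$, a contradiction. If $n \ne 0$ and $m \ne 0$, apply Lemma~\ref{lem:propagates} to $z$: for each $u \in X$, the section $\varphi_u(z^p) \in H_{vu}$ is nontrivial (its $G'$-class is $a^{\lambda n} b^n$ and $\lambda n \ne 0$) and lies in $U$ with length at most $|z| < |y|$, contradicting the minimality of $y$ in $U$.

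The remaining case, $n = 0$ and $m \ne 0$ with $z \equiv_{G'} a^m$, is the main obstacle. Here Lemma~\ref{lem:propagates} gives $\varphi_u(z^p) \in G' \cap H_{vu}$ with length at most $|z| < |y|$; applying the first case inductively to these sections forces each $\varphi_u(z^p) = 1$, so $z^p = 1$ and $z$ has order $p$. To finish, the plan is to multiply $z$ by an appropriate power of the minimum-length $U$-element at $v$ (which, by Lemma~\ref{lem:propagates}, can be chosen in $H_v$ for every $v$) to land in $V$ with length strictly less than $|x|$, contradicting the minimality of $x$ in $V$. Writing $y \equiv_{G'} a^{\lambda i} b^i$ and setting $k = -m/(\lambda i) \in \mathbb{F}_p^*$, the product $zy^k$ lies in $V$ with length at most $|z| + k|y|$; this is strictly less than $2|y| \le |x|$ precisely when $k = 1$, i.e., $m = -\lambda i$, giving the desired contradiction in that subcase. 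For arbitrary $m$, the hard part will be controlling the length when $k > 1$, and the plan is to exploit either more flexible combinations $z^{k'} y^l$ with $k' m + l\lambda i = 0$, optimising $k'|z| + l|y|$ over valid pairs $(k', l) \in \mathbb{F}_p^2 \setminus \{(0,0)\}$, or the structural constraint $g_1 g_2 \cdots g_p = 1$ that $z^p = 1$ imposes on the sections of $a^{-m}z \in G'$, in order to manufacture either a $V$-element shorter than $|x|$ or a nontrivial element of $G' \cap H_{vu}$ of length $<|z|$ with which to close the induction.
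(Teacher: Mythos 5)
Your case analysis modulo $G'$ is sound as far as it goes: the cases $z\in G'$, $z\equiv_{G'}b^n$ ($n\ne 0$), and $z\equiv_{G'}a^mb^n$ ($m,n\ne 0$) are all correctly dispatched (via minimality of the counterexample together with Lemma~\ref{shortening}, via minimality of $x$ in $V$ together with \eqref{eq:2y-x}, and via Lemma~\ref{lem:propagates} together with minimality of $y$ in $U$, respectively), and these match the easy parts of the paper's argument. Even your side observation that in the last case $\varphi_u(z^p)=1$ for all $u$, hence $z^p=1$, can be made rigorous with the minimal-counterexample argument you sketch. But the case $z\equiv_{G'}a^m$ with $m\ne 0$ is exactly where the real work lies, and there you only offer a plan with an acknowledged hole. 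The first proposed route does not close it: if $k'm+l\lambda i\equiv 0$ with $l\ne 0$, the naive bound $|z^{k'}(y')^{l}|\le |k'|\,|z|+|l|\,|y|$ beats the threshold $2|y|\le |x|$ only when $|l|=1$ and $|k'|\,|z|<|y|$, and the congruence forces $k'\equiv \mp\lambda i/m$, whose representative can be large while $|z|$ can be as large as $|y|-1$; so in general no element of $V$ shorter than $|x|$ is produced this way. The second route ($z^p=1$ plus Lemma~\ref{lem:derived-product}) is not developed into an argument at all.

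What the paper actually does in this case is different and is the key idea you are missing: it does not try to bound the length of the product $z^ky$ itself, but of its first-level sections, and it plays the two products $z^ky$ and $z^{k-p}y$ off against each other. Choosing $k$ with $z^ky\equiv_{G'}b^{j_y}$, one computes $\varphi_u(z^ky)=z_{u+i_z}\cdots z_{u+ki_z}y_u$ and $\varphi_u(z^{k-p}y)=z_u^{-1}\cdots z_{u+(k+1)i_z}^{-1}y_u$, so the $z$-sections occurring in the two expressions are disjoint and together have total length at most $|z|$; moreover $\varphi_u(z^p)\equiv_{G'}1$ gives $\varphi_u(z^{k-p}y)\equiv_{G'}\varphi_u(z^ky)$. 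Case 1 of Lemma~\ref{general-split-cases} for $z^ky$ is excluded because $|\varphi_u(z^ky)|\le |z|+\tfrac{|y|+1}{2}<2|y|\le|x|$ would contradict the minimality of $x$; so there are two vertices where both exponents are non-zero, and at such a vertex $u$ with $|y_u|\le\tfrac{|y|}{2}$ one of the two sections has length at most $\tfrac{|y|}{2}+\tfrac{|z|}{2}<|y|$, while your own case $m,n\ne 0$ forces both to have length at least $|y|$ — the desired contradiction. Without this (or some substitute for it), your proof of the lemma is incomplete precisely in the subcase that matters.
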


\begin{proof}
Without loss of generality, we may assume that $z\notin \textup{St}_G(1)$. Indeed, otherwise, since $z$ is non-trivial, there must exist some $w\in X^*$ such that $z\in \text{st}_G(w)$ and $\varphi_w(z)\notin \textup{St}_G(1)$. As we have $|z|\geq |\varphi_w(z)|$, it is sufficient to show that $|\varphi_w(z)|\geq |y|$. Thus, we may assume that $z\notin \textup{St}_G(1)$. Hence, we have $z\equiv_{G'} a^{i_z}b^{j_z}$ for some $i_z,j_z\in \mathbb{F}_p$ with $i_z\ne 0$.

Notice that for all $u\in X$, we have
$\varphi_u(z^p)\equiv_{G'} a^{\lambda j_z}b^{j_z}$
by Lemma \ref{lem:propagates}. In particular, if $j_z\neq 0$, by the minimality of $|y|$, we must have $|\varphi_u(z^p)| \geq |y|$. Since Lemma \ref{lem:propagates} also says that $|z|\geq |\varphi_u(z^p)|$, the conclusion follows in this case.

Thus, it only remains to treat the case where $j_z=0$. Let us assume for the sake of contradiction that we have $|z|<|y|$, and let $j_y\ne 0$ be such that $y\equiv_{G'}a^{\lambda j_y}b^{j_y}$. Since $j_z=0$, there exists $k\in \{1,2,\dots, p-1\}$ such that $z^ky\equiv_{G'} b^{j_y}$. By Lemmata~\ref{shortening} and~\ref{lem:propagates} together with (\ref{eq:product-g1-g2}), for all $u\in X$ we have $|\varphi_u(z^ky)|\leq |z|+\frac{|y|+1}{2} < 2|y|$. If there were some $u\in X$ with $\varphi_u(z^ky)\equiv_{G'}b^l$ for some $l\ne 0$,  by the minimality of $|x|$ we would have that $|x|\leq |\varphi_u(z^ky)|<2|y|$, a contradiction to (\ref{eq:2y-x}). Thus, by Lemma~\ref{general-split-cases}, there must exist two vertices $u_1,u_2\in X$ such that $\varphi_{u_l}(z^ky)\equiv_{G'} a^{i_{u_l}}b^{j_{u_l}}$ with $i_{u_l}, j_{u_l}\ne 0$, for $l\in \{1,2\}$.

Let us write $y=a^{i_y} \cdot \psi^{-1}((y_1,\ldots,y_p))$ and $z=a^{i_z} \cdot \psi^{-1}((z_1,\ldots,z_p))$. For any $u\in X$, we have, remembering that we act on the tree~$T$ on the right,
\begin{align*}
\varphi_u(z^ky) &=\varphi_u\Big( \psi^{-1}\big((z_1,\ldots,z_p)^{a^{-i_z}} (z_1,\ldots,z_p)^{a^{-2i_z}} \cdots  (z_1,\ldots,z_p)^{a^{-ki_z }} (y_1,\ldots,y_p)\big)\Big)\\
&= z_{u+i_z}z_{u+2i_z}\cdots z_{u+ki_z}y_u.
\end{align*}
We also have 
\begin{align*}
\varphi_u(z^{k-p}y) &=\varphi_u\Big( \psi^{-1}\big((z_1^{-1},\ldots,z_p^{-1}) (z_1^{-1},\ldots,z_p^{-1})^{a^{i_z}} \cdots  (z_1^{-1},\ldots,z_p^{-1})^{a^{(p-k-1)i_z }} (y_1,\ldots,y_p)\big)\Big)\\
&= z^{-1}_{u}z^{-1}_{u-i_z}\cdots z^{-1}_{u+(k+1)i_z}y_u.
\end{align*}
As $\varphi_u(z^p)\equiv_{G'} 1$, we have 
\[\varphi_u(z^{k-p}y)\equiv_{G'} \varphi_u(z^p)\varphi_u(z^{k-p}y) \equiv_{G'} \varphi_u(z^ky).\]
In particular, this means that for $u\in \{u_1,u_2\}$, we have 
$$
\varphi_{u}(z^{k-p}y)\equiv_{G'} \varphi_{u}(z^{k}y) \equiv_{G'} a^{i_{u}}b^{j_{u}}
$$
with $i_{u}, j_{u}\ne 0$. We have seen above that this implies that $|\varphi_{u}(z^{k-p}y)|, |\varphi_{u}(z^ky)|\geq |y|$.

Since $\sum_{n=1}^{p}|y_n|\leq |y|$, there must exist some $u\in \{u_1,u_2\}$ such that $|y_{u}|\leq \frac{|y|}{2}$. For this $u$, we have
\[
|\varphi_{u}(z^ky)| \leq |y_u|+ \sum_{n=1}^{k}|z_{u+ni_z}|
\]
and
\[
|\varphi_{u}(z^{k-p}y)| \leq |y_u|+  \sum_{n=0}^{p-k-1}|z_{u-ni_z}|.
\]
Since $\sum_{n=0}^{p-1}|z_{u-ni_z}|\leq |z|$, we must have that one of $\sum_{n=1}^{k}|z_{u+ni_z}|$ or $\sum_{n=0}^{p-k-1}|z_{u-ni_z}|$ is at most $\frac{|z|}{2}$. Consequently, we either have
\[
|\varphi_{u}(z^ky)| \leq |y_u| + \frac{|z|}{2} \leq \frac{|y|+|z|}{2}< |y|
\]
or $|\varphi_{u}(z^{k-p}y)|<|y|$. In either case, we get a contradiction, as required.
\end{proof}

The following result is essential.  First let us write $y=a^{i_y} \cdot \psi^{-1}((y_1,\ldots,y_p))\equiv_{G'} a^{\lambda j_y}b^{j_y} $, for some $j_y\ne 0$ and $i_y=\lambda j_y$. By abuse of notation, we will still write $y$ for $\varphi_u(y^p)$ for any $u\in X$; compare with Lemma~\ref{lem:propagates}. Additionally, for notational convenience, we sometimes write $z_v$ or $(z_1,\ldots,z_p)_v$ for $\varphi_v(z)$, where $z=\psi^{-1}((z_1,\ldots,z_p))\in \text{st}_G(v)$ and $v\in X^*\backslash\{\varnothing\}$.

\begin{lemma}\label{w}
In the set-up above, in particular assuming that $|x|>1$, there is an element $w\in H_v$, for some $v\in X^*$, with $w\equiv_{G'} a^nb^m$ for $n\ne \lambda m$, $n\ne 0$ and $|w|=|y|$.
\end{lemma}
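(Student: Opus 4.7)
The natural candidate for $w$ is the section $x_{u_1}$ of $x$ at one of the two ``Case 2'' vertices supplied by Lemma~\ref{general-split-cases}. Since $|x|>1$, the minimality of $x$ in $V$ together with Lemma~\ref{shortening} forbids Case~1 from occurring (a Case~1 section would be a strictly shorter element of $V$), so Lemma~\ref{general-split-cases} yields $u_1\in X$ with $x_{u_1}\equiv_{G'}a^{i_{u_1}}b^{j_{u_1}}$, $i_{u_1}\ne 0$, $i_{u_1}\ne\lambda j_{u_1}$. This gives the congruence part of the statement for free, and Lemma~\ref{minimal-a} yields $|x_{u_1}|\ge |y|$, so the only content of the lemma is the matching upper bound $|x_{u_1}|\le |y|$ (possibly after replacing $x_{u_1}$ by a twist).

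For the upper bound, my plan is to argue by contradiction: assume $|x_{u_k}|>|y|$ for $k=1,2$. Recall that at each vertex $u\in X$ we have the propagated copy $y^{(u)}:=\varphi_u(y^p)\in H_u$ with $y^{(u)}\equiv_{G'}a^{\lambda j}b^j$ and $|y^{(u)}|=|y|$. Choose $s_k\in\mathbb{F}_p$ so that the $a$-exponent of $x_{u_k}(y^{(u_k)})^{-s_k}$ vanishes, namely $s_k=i_{u_k}/(\lambda j)$. Then
\[
x_{u_k}(y^{(u_k)})^{-s_k}\equiv_{G'}b^{\,j_{u_k}-j s_k}
\]
and the Case~2 hypothesis $i_{u_k}\ne\lambda j_{u_k}$ is exactly what guarantees $j_{u_k}-js_k\ne 0$, so this element lies in~$V$. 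By the minimality of~$x$ in~$V$, this forces $|x|\le |x_{u_k}|+s_k|y|$, which, combined with the $y$-part of the same construction at the \emph{other} Case~2 vertex and with the bound $\sum_u|x_u|\le |x|$ from Lemma~\ref{shortening}, should yield a strict improvement over $|x|\ge 2|y|$ (the refined bound $|x|\ge 2|y|+1$ from Lemma~\ref{lemma:SectionLessThanHalf} is the handle here) and produce either a shorter element of~$V$ or a shorter element of~$U$, contradicting the minimality of $x$ or $y$.

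The main obstacle will be making this reduction tight enough: the crude inequality $|x_{u_k}(y^{(u_k)})^{-s_k}|\le |x_{u_k}|+s_k|y|$ carries an unwanted factor of $s_k\le p-1$, and so a direct bound on $|x|$ is too weak. Getting the right contradiction will require either (a) first raising $x_{u_k}(y^{(u_k)})^{-s_k}$ to a suitable power and exploiting Lemma~\ref{lem:propagates} to pass to a section of length no larger than $\sum_u|x_u|-|x_{u_k}|$, thereby converting the $s_k|y|$-cost into information about the \emph{other} sections, or (b) combining the two Case~2 vertices simultaneously by working with an element of the form $x\cdot\psi^{-1}\bigl((y^{(1)})^{t_1},\ldots,(y^{(p)})^{t_p}\bigr)$ and choosing the $t_u$ to kill the $a$-parts of as many sections as possible at once. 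In either case, once we produce an element of length $\le|y|$ in $H_v$ with the right congruence class, the matching lower bound from Lemma~\ref{minimal-a} forces equality and completes the proof.
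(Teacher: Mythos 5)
Your overall target is the right one (produce, at some vertex, an element with the congruence type of the lemma and length at most $|y|$, then invoke Lemma~\ref{minimal-a} for the matching lower bound), and the raw materials you reach for — propagated copies of $y$ via Lemma~\ref{lem:propagates}, minimality of $x$ in $V$ and of $y$ in $U$ — are the ones the paper uses. But the step you yourself flag as "the main obstacle" is precisely the content of the lemma, and neither of your proposed repairs is carried out. The crude bound $|x|\le |x_{u_k}(y^{(u_k)})^{-s_k}|\le |x_{u_k}|+s_k|y|$, combined with $|x_{u_1}|+|x_{u_2}|\le |x|$, only gives $|x_{u_{3-k}}|\le s_k|y|$, which is no contradiction once $s_k\ge 2$; and your fix (a) cannot work as stated, because $x_{u_k}(y^{(u_k)})^{-s_k}\equiv_{G'}b^{t}$ lies in $\st_G(1)$, so Lemma~\ref{lem:propagates} (which requires a non-trivial $a$-exponent) gives no control over sections of its $p$-th power; fix (b) is only a gesture. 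Note also that you have recast the lemma as the claim $|x_{u_1}|\le |y|$ for an actual section of $x$, which is stronger than what is needed (and not what the paper establishes): all that is required is \emph{some} element of the right congruence type of length exactly $|y|$.

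The paper's proof closes the gap with a different device. It introduces the auxiliary set $W$ of all elements $g\in\bigsqcup_v H_v$ with $g\equiv_{G'}a^nb^m$, $n\ne 0$, $n\ne\lambda m$, takes $w\in W$ of minimal length (your two Case-2 sections of $x$ show $W\ne\emptyset$ and $|w|\le\frac{|x|}{2}$), and chooses $k$ so that $y^k$ cancels the total $a$-exponent of $w$. The key point is then to pass to \emph{first-level sections of both} $y^{-k}w$ and $y^{p-k}w$: at a vertex $v$ these have length at most $\sum_{d=0}^{k-1}|y_{v-di}|+|w_v|$ and $\sum_{d=k}^{p-1}|y_{v-di}|+|w_v|$ respectively, and since the two $y$-sums together are at most $|y|$, at a Case-2 vertex with $|w_v|\le\frac{|w|}{2}$ one of the two sections is an element of $W$ of length at most $\frac{|y|+|w|}{2}$; minimality of $w$ then forces $|w|\le|y|$, and Lemma~\ref{minimal-a} gives equality. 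This "split the $p$ sections of $y$ into the two arcs determined by $k$" trick is exactly what eliminates the factor $s_k$ that defeats your estimate. Finally, the paper must also handle the possibility that $y^{-k}w$ or $y^{p-k}w$ falls into Case 1 of Lemma~\ref{general-split-cases}; this degenerate branch forces $|x|=2$, $|y|=1$ and is dealt with separately, whereas your sketch does not address it at all. As written, then, the proposal identifies the difficulty but does not resolve it, so the proof is incomplete.
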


\begin{proof}
Let
\[
W=\Big\{g\in \bigsqcup_{v\in X^*} H_v \mid g\equiv_{G'} a^n b^m \text{ for some }m,n \text{ with } n\ne 0, n\ne \lambda m \Big\}
\]
and let $w\in W$ be an element of minimal length. We need to show that $|w|=|y|$. Let us first remark that $|w|\leq \frac{|x|}{2}$. Indeed, since $|x|>1$, it follows from Lemma~\ref{general-split-cases} that there exist $u_1,u_2\in X$ such that $x_{u_1},x_{u_2}\in W$. Since $|x_{u_1}|+|x_{u_2}|\leq |x|$, we conclude that $|w|\leq \min\{|x_{u_1}|, |x_{u_2}|\}\leq \frac{|x|}{2}$.

Let us write $w=a^n\cdot \psi^{-1}((w_1,\ldots,w_p))$. There exists $k\in \{1,2,\ldots,p-1\}$ with $y^k\equiv_{G'} y^{k-p}\equiv_{G'} a^n b^{kj_y}$. Using $y$, or more precisely the projection of some power of $y$ to the vertex where $w$ lives, to cancel $a^n$ in~$w$ yields
\begin{equation}\label{eq:half-length}
\begin{split}
|(y^{-k}w)_v|&= \lvert\big( (y_1^{-1},\ldots,y_p^{-1}) (y_1^{-1},\ldots,y_p^{-1})^{a^{i_y}}  \cdots (y_1^{-1},\ldots,y_p^{-1})^{a^{(k-1)i_y}} (w_1,\ldots,w_p)\big)_v\rvert\\
&=  | y^{-1}_{v}y^{-1}_{v-i_y}\cdots y^{-1}_{v-(k-1)i_y}w_v| \\
&\le |y^{-1}_{v}y^{-1}_{v-i_y}\cdots y^{-1}_{v-(k-1)i_y}|+|w_v|\\
&\le \sum_{d=0}^{k-1} |y_{v-di_y}|+|w_v|
\end{split}
\end{equation}
and
\begin{equation}\label{eq:half-length2}
\begin{split}
|(y^{p-k}w)_v|&= |y_{v-(p-1)i_y}\cdots y_{v-i_y}y_{v} y^{-1}_{v}y^{-1}_{v-i_y}\cdots y^{-1}_{v-(k-1)i_y}w_v| \\
&\le |y_{v-(p-1)i_y}\cdots y_{v-ki_y}|+|w_v|\\
&\le \sum_{d=k}^{p-1} |y_{v-di_y}|+|w_v|
\end{split}
\end{equation}
for $v\in X$.

Since $\sum_{d=0}^{p-1} |y_{v-di}| \leq |y|$ for $v\in X$, we have
\[
\min{\left\{|(y^{-k}w)_v|,|(y^{p-k}w)_v|\right\}} \leq \frac{|y|}{2}+|w_v| \leq \frac{|y|}{2} + \frac{|x|}{4}+\frac{1}{2}\leq\frac{|x|+1}{2}
\]
and
\[
\max{\left\{|(y^{-k}w)_v|,|(y^{p-k}w)_v|\right\}} \leq |y|+|w_v| \leq \frac{|x|}{2} + \frac{|x|}{4}+\frac{1}{2}\leq\frac{3|x|+2}{4},
\]
making use of~\eqref{eq:2y-x}.

As we will see, this implies that there exists a $w_1\in \left\{(y^{-k}w)_v, (y^{p-k}w)_v \mid v\in X\right\}$ such that $w_1\in W$ and $|w_1|\leq \frac{|y|+|w|}{2}$.

Indeed, if $y^{-k}w$ is in Case 1 of Lemma~\ref{general-split-cases}, then there exists $v\in X$ such that $(y^{-k}w)_v \equiv_{G'} b^{t'}$ for some $t'\ne 0$. By the minimality of~$|x|$ and the  above inequalities, we must have $|x|\leq |(y^{-k}w)_v|\leq \frac{3|x|+2}{4}$. This is only possible if $|x|\leq 2$, and since we assume that $|x|>1$, this implies $|x|=2$, and so $|y|=1$ by~\eqref{eq:2y-x}.  Notice that since $|x|=2$, we cannot have $|x|\leq |(y^{-k}w)_v|\leq \frac{|x|+1}{2}$ and thus, by the inequalities above, we must have $|(y^{p-k}w)_v|\leq \frac{|x|+1}{2} = \frac{3}{2}$. Since the length must be an integer, we have  $|(y^{p-k}w)_v| \leq 1 \leq \frac{|y|+|w|}{2}$, where the last inequality follows from Lemma~\ref{minimal-a} applied to $w\ne 1$. Notice that $(y^{p-k}w)_v = (y^p)_v(y^{-k}w)_v \equiv_{G'} a^{\lambda j_y}b^{j_y+t'}$ with $\lambda j_y \ne 0$. Thus, in this case, $w_1=(y^{p-k}w)_v$ has the required properties.

By symmetry, if $y^{p-k}w$ is in Case 1 of Lemma~\ref{general-split-cases}, then we can also find some $w_1$ of the required form. Thus, it only remains to check the case when both $y^{-k}w$ and $y^{p-k}w$ are in Case~2 of Lemma~\ref{general-split-cases}. In this case, there exist at least two vertices $v_1,v_2\in X$ such that $(y^{- k}w)_{v_1}$ and $(y^ {-k}w)_{v_2}$ have non-zero total $a$-exponent differing from $\lambda$ times their total $b$-exponent, and it follows from Lemma \ref{lem:propagates}  that $(y^{p-k}w)_{v_1}$ and $(y^ {p-k}w)_{v_2}$ satisfy the same property. 
Since $|w_{v_1}|+|w_{v_2}|\leq |w|$, we can assume without loss of generality that $|w_{v_1}|\leq \frac{|w|}{2}$. Then, we have
\[\min{\left\{|(y^{-k}w)_{v_1}|,|(y^{p-k}w)_{v_1}|\right\}} \leq \frac{|y|}{2}+|w_{v_1}| \leq \frac{|y|+|w|}{2},\]
and we set $w_1$ as the smallest of these two elements.

We have thus found some $w_1\in W$ with $|w_1|\leq \frac{|y|+|w|}{2}$. By the minimality of $|w|$, we have $|w|\leq |w_1|$, and so $|w|\leq |y|$. As we have $|y|\leq |w|$ from Lemma~\ref{minimal-a}, we conclude that $|w|=|y|$.
\end{proof}

\bigskip

Finally, to prove Proposition~\ref{proposition: first}, we return to our assumption that $|x|>1$. We will now obtain a contradiction using the above results. Hence $|x|=1$, as required.

 We will use  $w$ from Lemma~\ref{w} to show that there
exists a $g \in \bigsqcup_{v\in X^*}H_v$ with $0<|g|<|y|$, which will contradict Lemma~\ref{minimal-a}.

As before, we write  $y=a^{i_y} \cdot \psi^{-1}((y_1,\ldots,y_p))\equiv_{G'} a^{\lambda j_y}b^{j_y} $ for some $j_y\ne 0$ and so $i_y=\lambda j_y$. From Lemma~\ref{w}, we have $w\in H_v$ for some $v\in X^*$ with $w\equiv_{G'} a^nb^m$ for $n\ne \lambda m$, $n\ne 0$ and $|w|=|y|$.
Let $k\in \{1,2,\ldots,p-1\}$ be such that $y^k\equiv_{G'} a^{n}b^{kj_y}$. There must exist at least two vertices $v_1,v_2\in X$ such that $(y^{-k}w)_{v_1}$ and $(y^{-k}w)_{v_2}$ have non-zero total $a$-exponent differing from $\lambda$ times  their total $b$-exponent. Indeed, as we saw in the proof of the previous lemma, both $y^{-k}w$ and $y^{p-k}w$ must be in Case 2 of Lemma~\ref{general-split-cases}, unless possibly when $|x|=2$ and $|y|=1$. As we will now see, this last case is impossible. Certainly, if $|x|=2$, $|y|=1$ and one of $y^{-k}w$ or $y^{p-k}w$ is in Case 1 of Lemma~\ref{general-split-cases}, then there exists  $u\in X$ such that $(y^{-k}w)_u\equiv_{G'} b^{t'}$ or $(y^{p-k}w)_u\equiv_{G'} b^{t'}$ for some $t'\ne 0$. For concreteness, let us assume that $(y^{-k}w)_u\equiv_{G'} b^{t'}$; the other case is similar.
Since $|y|=|w|=1$, there exist $i_1,i_2,i_3,i_4\in \{0,1,\dots, p-1\}$ such that $y=a^{i_1}b^{j_y}a^{i_2}$ and $w=a^{i_3}b^{m}a^{i_4}$ (in particular, notice that we have $i_1+i_2=i_y$ and $i_3+i_4=n$, where these equations are taken modulo $p$). A direct computation then yields that 
\begin{align*}
y^{-k}w &= a^{-i_2}(b^{-j_y})(b^{-j_y})^{a^{i_y}} \cdots (b^{-j_y})^{a^{(k-1)i_y}}a^{i_2-ki_y}a^{i_3}b^ma^{i_4}\\
&= (b^{-j_y})^{a^{i_2}}(b^{-j_y})^{a^{i_2+i_y}} \cdots (b^{-j_y})^{a^{i_2+(k-1)i_y}}(b^m)^{a^{n-i_3}}.
\end{align*}
Therefore, we have
\[(y^{-k}w)_u = (b^{-j_y})_{u-i_2}(b^{-j_y})_{u-i_2-i_y} \cdots (b^{-j_y})_{u-i_2-(k-1)i_y}(b^m)_{u-n+i_3}.\]
Using the fact that for $v\in X$, we have
\[b_v=\begin{cases}
a^{e_v} & \text{ if }v\ne p,\\
b & \text{ if } v=p,
\end{cases}\]
and that $u-i_2, u-i_2-i_y,\dots, u-i_2-(k-1)i_y$ are all different vertices, we see that there are only four different possible forms for $(y^{-kw})_u$: $a^*$, $a^*b^{-j_y}a^*$, $a^*b^{m}$, or $a^*b^{-j_y}a^*b^{m}$, where the stars represent unimportant (possibly zero) powers of $a$. Since we assumed that $(y^{-k}w)_u\equiv_{G'} b^{t'}$, we must have $2=|x|\leq |(y^{-k}w)_u|$, and thus only the last form is possible, since the other forms yield elements of length at most one. In particular, notice that we have $t'=m-j_y$. Let us now consider the element $(y^{p-k}w^{1-p})_u = (y^p)_u(y^{-k}w)_u(w^{-p})_u$. Since $(y^p)_u\equiv_{G'} a^{\lambda j_y}b^{j_y}$ and $w^{-p}\equiv_{G'} a^{-\lambda m}b^{-m}$. we have $(y^{p-k}w^{1-p})_u \equiv_{G'} a^{\lambda(j_y-m)}b^{t'+j_y-m} = a^{-\lambda t'}$. Considerations of length allow us to conclude that in fact, we must have $(y^{p-k}w^{1-p})_u = a^{-\lambda t'}$. Indeed, in exactly the same way as above, we find that
\[(y^{p-k}w^{1-p})_u = \prod_{d=1}^{p-k}(b^{j_y})_{u-i_2+di_y}\prod_{v\in X\backslash \{u-n+i_3\}}(b^{-m})_v.\]
As the sets $\{u-i_2+i_y, u-i_2+2i_y,\dots, u-i_2+(p-k)i_y\}$ and $X\backslash \{u-n+i_3\}$ are disjoint from $\{u-i_2, u-i_2-i_y, \dots, u-i_2-(k-1)i_y\}$ and $\{u-n+i_3\}$, respectively, and that the latter two sets both contain $p$ by previous considerations, we conclude that all the elements in the above product are powers of $a$, and thus that $(y^{p-k}w^{1-p})_u=a^{-\lambda t'}$. As $t'\ne 0$, this element is non-trivial, but it is of length $0$, a contradiction with Lemma \ref{minimal-a}. A similar argument yields a contradiction if we assume that $y^{p-k}w$ is in Case 1 of Lemma~\ref{general-split-cases}. We conclude that both $y^{-k}w$ and $y^{p-k}w$ must be in Case 2 of Lemma~\ref{general-split-cases}.

Let $u\in\{v_1,v_2\}$. We may assume that $|w_u|=\frac{|w|}{2}$, else we are done. Indeed, otherwise, one of $|w_{v_1}|$ or $|w_{v_2}|$ must be strictly smaller than $\frac{|w|}{2}$. Let us suppose without loss of generality that $|w_{v_2}|<\frac{|w|}{2}$. Then either $|(y^{-k}w)_{v_2}|<\frac{|y|}{2}+\frac{|w|}{2}=|y|$ or  $|(y^{p-k}w)_{v_2}|<\frac{|y|}{2}+\frac{|w|}{2}=|y|$. As this contradicts Lemma~\ref{minimal-a}, we have that 
\begin{equation}\label{eq:w-sections}
|w_{v_1}|=|w_{v_2}|=\frac{|w|}{2}
\end{equation}
and hence $|y|=|w|=2\mu$ for some $\mu\in \mathbb{N}$.

By symmetry, that is, by considering $w^{k'}y$ for some $k'$, it likewise follows that only two first-level sections of $y$ are of non-zero length and that the sum of their length must be $|y|$.
Let $i_1,i_2\in X$ be such that 
\[
|y_{i_1}|=|y_{i_2}|=\frac{|y|}{2}=\mu.
\]

If $k=1$, then $|x|\le |y^{-1}w|\leq |y|+|w|=2|y|$ and hence $|y^{-1}w|=2|y|$ by~\eqref{eq:2y-x}. Therefore, it follows from Lemma~\ref{lemma:SectionLessThanHalf} that there exists $u\in X$ such that $(y^{-1}w)_u\ne 1$ and $|(y^{-1}w)_u|<|y|$. As this contradicts Lemma~\ref{minimal-a}, we conclude that the case $k=1$ is impossible. Likewise, we see that the case $k=p-1$ is impossible by looking at $y^{p-(p-1)}w=yw$.

Let us now assume that $1<k<p-1$. For $v\in X$, recall that we have
\[|(y^{-k}w)_v|=|y^{-1}_{v}y^{-1}_{v-i_y}\cdots y^{-1}_{v-(k-1)i_y}w_v|\leq \sum_{d=0}^{k-1} |y_{v-di_y}|+|w_v|\]
and
\[|(y^{p-k}w)_v|=|(y_{v-ki_y}^{-1}y_{v-(k+1)i_y}^{-1}\cdots y_{v-(p-1)i_y}^{-1})^{-1}w_v|\leq \sum_{d=k}^{p-1} |y_{v-di_y}|+|w_v|.\]
Since we assume that $|w_{v_1}|=|w_{v_2}|=\frac{|y|}{2}$, it follows that unless $\sum_{d=0}^{k-1} |y_{v_l-di_y}|=\sum_{d=k}^{p-1} |y_{v_l-di_y}|=\frac{|y|}{2}$ for $l\in \{1,2\}$, then one of $|(y^{-k}w)_{v_l}|$ or $|(y^{p-k}w)_{v_l}|$ is strictly smaller than $|y|$, which is impossible.

Notice also that if there exists $v\in X\backslash \{v_1,v_2\}$ such that $0<|\prod_{d=0}^{k-1}y_{v-di_y}^{-1}|<|y|$, then  $(y^{-k}w)_{v}$ 
is a non-trivial element of length strictly smaller than $|y|$, since we must have $|w_v|=0$ when $v$ is different from $v_1$ or $v_2$. As this contradicts Lemma~\ref{minimal-a}, for all $v\in X\backslash\{v_1,v_2\}$, we must have either $|\prod_{d=0}^{k-1}y_{v-di_y}^{-1}|=0$ or $|\prod_{d=0}^{k-1}y_{v-di_y}^{-1}|=|y|$. Likewise, we must also have either  $|\prod_{d=k}^{p-1}y_{v-di_y}^{-1}|=0$ or $|\prod_{d=k}^{p-1}y_{v-di_y}^{-1}|=|y|$.

This implies strong restrictions on the form of $y$. Recall from above that there exist $i_1,i_2\in X$ such that $|y_{i_1}|=|y_{i_2}|=\frac{|y|}{2}$. This implies that, up to renaming $i_1$ and $i_2$, we must have
\begin{equation}\label{tight-form}
y=a^{i_y}(b^{s_1})^{a^{i_1}}(b^{s_2})^{a^{i_2}}\cdots (b^{s_{2\mu-1}})^{a^{i_1}}(b^{s_{2\mu}})^{a^{i_2}}
\end{equation}
where $s_1+\cdots+s_{2\mu}=j_y$. In particular, for all $u\in X\backslash\{i_1,i_2\}$, we have $|y_u|=0$. This implies that if, for some $v\in X$, the set $\{v-di_y \mid 0\leq d\leq k-1\}$ contains exactly one of either $i_1$ or $i_2$, then $|\prod_{d=0}^{k-1}y_{v-di}^{-1}|=\frac{|y|}{2}$. By the above considerations, there can be only two such sets, namely when $v=v_1$ or $v=v_2$. As the next lemma shows, this can only be the case if $i_2=i_1+i_y$ or $i_2=i_1-i_y$.

\begin{lemma}
Let $i, k, i_1,i_2\in \mathbb{F}_p$ be four elements of $\mathbb{F}_p$, with $i\ne 0$, $i_1\ne i_2$ and $1<k<p-1$. For all $v\in \mathbb{F}_p$, let
\[
I_v = \{v-di\mid 0\leq d \leq k-1\}.
\]
If there exist two elements $v_1, v_2\in \mathbb{F}_p$ such that
\[
|I_{v}\cap\{i_1,i_2\}| = 1 \quad\Longleftrightarrow \quad v\in \{v_1, v_2\},
\]
then either $i_2=i_1+i$ or $i_2=i_1-i$.
\end{lemma}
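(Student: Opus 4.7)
My plan is to reformulate the hypothesis in terms of translated arithmetic progressions in $\mathbb{F}_p$, reduce to a clean combinatorial statement about intervals in a cyclic group, and then analyze when two such intervals overlap in exactly $k-1$ elements.

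First, I would observe that $i_1 \in I_v$ if and only if $v \in J_1 := i_1 + \{0, i, 2i, \ldots, (k-1)i\}$, and similarly $i_2 \in I_v$ if and only if $v \in J_2 := i_2 + \{0, i, 2i, \ldots, (k-1)i\}$. Consequently, $|I_v \cap \{i_1, i_2\}| = 1$ if and only if $v$ lies in the symmetric difference $J_1 \triangle J_2$. The hypothesis therefore says $|J_1 \triangle J_2| = 2$, and since $|J_1| = |J_2| = k$ (note $k < p$, so there is no coincidence among the $k$ listed elements), this is equivalent to $|J_1 \cap J_2| = k-1$.

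Next, since $i \neq 0$, multiplication by $i^{-1}$ is a bijection on $\mathbb{F}_p$ preserving cardinalities of intersections, so after setting $J := \{0, 1, 2, \ldots, k-1\} \subseteq \mathbb{F}_p$ and $t := (i_2 - i_1)/i$, the problem reduces to showing that $|J \cap (J + t)| = k-1$ forces $t \in \{1, -1\}$. The goal then becomes a clean statement about two ``windows'' of $k$ consecutive elements in the cyclic group $\mathbb{F}_p$.

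I would then do a direct case analysis on the representative $t \in \{0, 1, \ldots, p-1\}$. If $t \leq p - k$, the translate $J + t = \{t, t+1, \ldots, t+k-1\}$ does not wrap around, and a short computation shows $|J \cap (J+t)| = \max(0, k - t)$, which equals $k-1$ only when $t = 1$. If $t > p - k$, the translate wraps around $\mathbb{F}_p$ and the intersection splits into two intervals; computing their sizes, I would show that $|J \cap (J+t)| = k - 1$ forces either $2k - p = k - 1$ (impossible since $k < p-1$) or $t = p - 1$, i.e.\ $t \equiv -1$. Translating back via $i_2 - i_1 = ti$ gives $i_2 = i_1 + i$ or $i_2 = i_1 - i$, as required.

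The main obstacle is bookkeeping the wrap-around in the cyclic group; I expect the argument is routine once the problem is isolated as ``when do two $k$-element intervals in $\mathbb{Z}/p\mathbb{Z}$ meet in exactly $k-1$ points,'' and the hypotheses $i_1 \neq i_2$ and $1 < k < p - 1$ are precisely what rule out the degenerate coincidences (all of $\mathbb{F}_p$, or the trivial overlap $t = 0$).
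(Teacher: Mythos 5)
Your proposal is correct, and after the shared first step it follows a genuinely different route from the paper. Both arguments normalise in the same way (the affine bijection $x\mapsto i^{-1}(x-i_1)$ reduces to $i=1$, $i_1=0$), but from there the paper argues by contradiction: assuming $i_2\ne\pm 1$, it uses the local relations $I_{v+1}\setminus I_v=\{v+1\}$ and $I_v\setminus I_{v+1}=\{v-k+1\}$ together with a minimality choice to exhibit four distinct values $v_1,v_1+1,v_2,v_2+1$ with $\lvert I_v\cap\{i_1,i_2\}\rvert=1$, contradicting the hypothesis that there are exactly two; this cleverly sidesteps any explicit wrap-around bookkeeping. You instead dualise the condition ($i_1\in I_v$ iff $v\in J_1=i_1+\{0,i,\dots,(k-1)i\}$, etc.), recast the hypothesis as $\lvert J_1\bigtriangleup J_2\rvert=2$, i.e.\ $\lvert J\cap(J+t)\rvert=k-1$ with $t=(i_2-i_1)/i$, and compute the overlap of two $k$-term intervals in $\mathbb{Z}/p\mathbb{Z}$ directly; your case analysis (no wrap: $\max(0,k-t)=k-1$ forces $t=1$; wrap: either $t+k-p=k-1$, i.e.\ $t=p-1$, or $2k-p=k-1$, excluded by $k<p-1$) is accurate, and the hypotheses $i\ne 0$, $1<k<p-1$ enter exactly where you say. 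Your approach buys an explicit formula for the overlap (hence a complete classification of when the hypothesis can hold), at the cost of the modular interval bookkeeping that the paper's witness-counting argument avoids. One small point to make explicit in a write-up: the hypothesis literally gives $\lvert J_1\bigtriangleup J_2\rvert\le 2$, and you should note that $\lvert J_1\bigtriangleup J_2\rvert=2\lvert J_1\setminus J_2\rvert$ is even and nonzero (it is nonzero since $J_1=J_2$ with $i_2-i_1\ne 0$ would force $J_1=\mathbb{F}_p$, impossible as $k<p$), so it is indeed exactly $2$.
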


\begin{proof}
Let $f\colon \mathbb{F}_p\rightarrow\mathbb{F}_p$ be the map defined by $f(x)=i^{-1}(x-i_1)$. As $f$ is a bijection, we have $|f(I_v)\cap \{f(i_1), f(i_2)\}|= |I_{v}\cap\{i_1,i_2\}|$, and since we have
\[f(I_v) = \{f(v)-d \mid 0\leq d \leq k-1\}\]
for all $v\in \mathbb{F}_p$, it suffices to prove the result for $i=1$ and $i_1=0$.

Suppose for the sake of contradiction that $i_2\ne 1, -1$, and let $v_1\in \{k,k+1,\dots, p-1\}$ be the smallest element, with respect to the standard order on $\{k,k+1,\dots, p-1\}$, such that $i_2\in I_{v_1}$. Note that $v_1$ exists, since $\bigcup_{v=k}^{p-1}I_v=\{1,2,\dots, p-1\}$. Note also that since $v_1\in \{k, k+1, \dots, p-1\}$, we know that $0\notin I_{v_1}$. We claim that $i_2\in I_{v_1+1}$ and that $0\notin I_{v_1+1}$. Indeed, we have $I_{v_1+1}\backslash I_{v_1}=\{v_1+1\}$.  Thus, if we had $0\in I_{v_1+1}$, this would imply that $v_1=p-1$. Since $k<p-1$, by the minimality of $v_1$, this would then imply that $i_2\notin I_{p-2}$, and therefore $i_2=p-1$, a contradiction. Similarly, since $I_{v_1}\backslash I_{v_1+1}=\{v_1-k+1\}$, if we had $i_2\notin I_{v_1+1}$, this would imply that $i_2=v_1-k+1$. Since $k>1$, we have $i_2<v_1$, and so $i_2\in I_{v_1-1}$. By the minimality of $v_1$, this implies that $v_1=k$. However, this then means that $i_2=1$, a contradiction.

We conclude that $i_2\in I_{v_1}, I_{v_1+1}$ and that $0\notin  I_{v_1}, I_{v_1+1}$. Likewise, we can also find $v_2\in\{i_2+k,i_2+k+1,\ldots,i_2+p-1\}$ such that $0\in I_{v_2}, I_{v_2+1}$ and $i_2\notin I_{v_2}, I_{v_2+1}$. Clearly $v_1, v_1+1, v_2, v_2+1$ are four different elements, and we have $|I_{v}\cap \{0, i_2\}|=1$ for all $v\in \{v_1, v_1+1, v_2, v_2+1\}$, which contradicts our assumptions. The result follows.
\end{proof}

Coming back to our considerations on the form of $y$, we have
\[y=a^{i_y}(b^{s_1})^{a^{i_1}}(b^{s_2})^{a^{i_2}}\cdots (b^{s_{2\mu-1}})^{a^{i_1}}(b^{s_{2\mu}})^{a^{i_2}},\]
where $i_2=i_1+i_y$ or $i_2=i_1-i_y$ by the previous lemma. If we had $i_2=i_1-i_y$, then we would have
\[\varphi_{i_2-i_y}(y^p) =y_{i_2}y_{i_1} y_{i_2+2i_y}\cdots y_{i_2+(p-1)i_y}.\]
From the form of $y$ above, we see that $y_{i_2}$ ends with $b^{s_{2\mu}}$ and $y_{i_1}$ begins with $b^{s_1}$, which implies that $|y_{i_2}y_{i_1}|\leq |y_{i_2}|+|y_{i_1}|-1$. Consequently, we have $|\varphi_{i_2-i_y}(y^p)|<|y|$, a contradiction to the minimality of $y$ by Lemma~\ref{lem:propagates}. We conclude that we must have $i_2=i_1+i_y$.

To finish the proof, it suffices to show that if $y$ is of the above form, then one of $(y^{p})_{v_1}$ or $(y^p)_{v_2}$ is not. Indeed, we can then repeat the whole argument above with $y'=(y^p)_{v}$ and $w'=(y^{-k}w)_v$ for the corresponding $v\in \{v_1,v_2\}$ and we will reach a contradiction.

Let us then prove this last claim. Suppose that
\begin{equation}\label{eq:BadY}
y=a^{i_y}(b^{s_1})^{a^{i_1}}(b^{s_2})^{a^{i_1+i_y}}\cdots (b^{s_{2\mu-1}})^{a^{i_1}}(b^{s_{2\mu}})^{a^{i_1+i_y}}.
\end{equation}
For $u\in \{v_1,v_2\}$, we have
\[(y^p)_{u} = y_{u+i_y}y_{u+2i_y}\cdots y_{u+pi_y},\]
and since we have supposed above that $\sum_{d=0}^{k-1} |y_{u-di_y}|=\frac{|y|}{2}$ and that $|y_{i_1}|=|y_{i_1+i_y}|=\frac{|y|}{2}$, we conclude that either $u=i_1$ or $u-ki_y=i_1$. Thus, up to renaming $v_1$ and $v_2$, we can assume that $v_1=i_1+ki_y$ and $v_2=i_1$. It follows that
\begin{equation}\label{eq:yv1}
(y^p)_{v_1} = a^{k_1}y_{i_1}y_{i_1+i_y}a^{k_2},
\end{equation}
where $a^{k_1} = y_{v_1+i_y}\cdots y_{v_1+(p-k-1)i_y}$ and $a^{k_2}= y_{v_{1}+(p-k+2)i_y}\cdots y_{v_1+pi_y}$, and that
\begin{equation}\label{eq:yv2}
(y^p)_{v_2} = y_{i_1+i_y}a^{k_2+k_1}y_{i_1}.
\end{equation}

From \eqref{eq:BadY} and \eqref{eq:yv1}, we see that
\begin{align*}
(y^p)_{v_1} &= a^{k_1}b^{t_1}a^{r_1}b^{t_2}a^{r_2}\cdots a^{r_{2\mu-1}}b^{t_{2\mu}}a^{k_2}\\
&= a^{k_1+\sum_{d=1}^{2\mu-1}r_d+k_2}(b^{t_1})^{a^{\sum_{d=1}^{2\mu-1}r_d+k_2}}(b^{t_2})^{a^{\sum_{d=2}^{2\mu-1}r_d+k_2}}\cdots (b^{t_{2\mu-1}})^{a^{r_{2\mu-1}+k_2}}(b^{t_{2\mu}})^{a^{k_2}}
\end{align*}
for some $t_1,\dots, t_{2\mu}$ and $r_1, \dots, r_{2\mu-1}$ in $\mathbb{F}_p$. Suppose that $(y^p)_{v_1}$ is of the same form as $y$, namely that there exists $i_3\in X$ such that
\[
(y^p)_{v_1}=a^{i_y}(b^{t_1})^{a^{i_3}}(b^{t_2})^{a^{i_3+i_y}}\cdots (b^{t_{2\mu-1}})^{a^{i_3}}(b^{t_{2\mu}})^{a^{i_3+i_y}}.
\]
This implies that $r_d=-i_y$ if $d$ is odd and $r_d=i_y$ if $d$ is even. Therefore, we obtain $\sum_{d=1}^{2\mu-1}r_d=-i_y$. Since we know from Lemma~\ref{lem:propagates} that $k_1+k_2+\sum_{d=1}^{2\mu-1}r_d=i_y$, we conclude that $k_1+k_2=2i_y$.

If we now turn our attention to $(y^p)_{v_2}$, it follows from \eqref{eq:BadY} and \eqref{eq:yv2} that
\[(y^p)_{v_2} = a^{r'_1}b^{t'_1}\cdots a^{r'_{\mu}}b^{t'_{\mu}} a^{k_1+k_2} b^{t'_{\mu+1}}a^{r'_{\mu+1}}\cdots b^{t'_{2\mu}}a^{r'_{2\mu}}\]
for some $t'_1,\dots, t'_{2\mu}$ and $r'_1, \dots, r'_{2\mu}$ in $\mathbb{F}_p$. Using the same reasoning as above, for $(y^p)_{v_2}$ to be of the same form as $y$, we would need either $k_1+k_2=i_y$ or $k_1+k_2=-i_y$, depending on the parity of $\mu$.

We conclude that $(y^p)_{v_1}$ and $(y^p)_{v_2}$ cannot both be in the same form as $y$. Indeed, this would imply that either $i_y=2i_y$ or $-i_y=2i_y$. Since $i_y\ne 0$, the first equation is impossible, and the second can only be satisfied if $p=3$. However, we assumed that there existed some $1<k<p-1$, which is impossible if $p=3$.

Therefore, there is some $u\in \{v_1, v_2\}$ such that $y'=(y^p)_{u}$ is not in the form of \eqref{eq:BadY}. Setting $w'=(y^{-k}w)_u$ and repeating the whole argument above with $y'$ and $w'$ will thus necessarily yield a contradiction. We conclude that $|x|=1$, and the result follows.

\begin{theorem} \label{last} Let $G$
  be a GGS-group acting on the $p$-regular rooted tree, for $p$ an odd prime.  Then
  $G$ does not contain any proper prodense subgroups.
\end{theorem}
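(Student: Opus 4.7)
The plan is to deduce Theorem~\ref{last} by assembling the results of Section~4 with Proposition~\ref{proposition: first}, splitting into three cases according to the structure of $G$: the torsion GGS-groups, the non-torsion GGS-groups conjugate in $\Aut T$ to a generalised Fabrykowski-Gupta group, and the remaining non-torsion GGS-groups. In the first case, all torsion GGS-groups are branch, so their proper quotients are virtually abelian and in particular have maximal subgroups only of finite index (cf.~\cite[Prop.~2.22]{Francoeur-paper}); by Pervova~\cite{Pervova4}, every maximal subgroup of $G$ has finite index, and the criterion recalled at the beginning of Section~4 then rules out the existence of a proper prodense subgroup. In the second case, I would simply cite the result of Francoeur and Garrido recorded in~\cite{Francoeur}, which already establishes that generalised Fabrykowski-Gupta groups admit no maximal subgroups of infinite index, and invoke the same correspondence.

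The substantive case is when $G$ is non-torsion and not conjugate to a generalised Fabrykowski-Gupta group. Assume for contradiction that $H$ is a proper prodense subgroup. First, by Proposition~\ref{proposition: first}, there exists a vertex $v_0\in X^*$ with $b\in H_{v_0}$. Since $G$ is fractal we have $G_{v_0}=G$, and the hypothesis that every proper quotient of $G$ has maximal subgroups only of finite index is available here (via Proposition~\ref{pro:virtually-nilpotent} when $G=\mathcal{G}$, and via virtual abelianness of proper quotients when $G$ is branch). Hence the first half of Proposition~\ref{3.3.3} applies, so $H_{v_0}$ is a prodense subgroup of~$G$ which contains~$b$. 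I would then invoke Proposition~\ref{proposition: second} to produce a vertex $u\in X^*$ such that $(H_{v_0})_u = H_{v_0 u}=G$.

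To close, I would use the second half of Proposition~\ref{3.3.3}. Since $H$ is proper and prodense, it is contained in some maximal subgroup $M$ of $G$ of infinite index; that proposition then shows that $M_{v_0 u}$ is a maximal, and in particular proper, subgroup of $G_{v_0 u}=G$. But $H\le M$ gives $H_{v_0 u}\le M_{v_0 u}$, and combined with $H_{v_0 u}=G$ this forces $M_{v_0 u}=G$, the desired contradiction. The real work --- the descent through the tree producing~$b$ in a section of the prodense subgroup --- is already performed in Proposition~\ref{proposition: first}, so what remains is essentially bookkeeping; the only subtlety is ensuring the hypothesis on proper quotients uniformly across the branch and weakly branch settings, and that has already been neutralised by Proposition~\ref{pro:virtually-nilpotent} for the exceptional group~$\mathcal{G}$.
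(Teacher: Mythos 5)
Your proposal is correct and takes essentially the same route as the paper: reduce to the non-torsion, non-Fabrykowski--Gupta case via Pervova and \cite[Thm.~7.2.7]{Francoeur}, then combine Propositions~\ref{proposition: first}, \ref{proposition: second} and~\ref{3.3.3} (with Proposition~\ref{pro:virtually-nilpotent} supplying the proper-quotient hypothesis for~$\mathcal{G}$) to produce a vertex whose section of the prodense subgroup is all of~$G$. The only cosmetic difference is in the final bookkeeping: you pass to a maximal subgroup of infinite index containing~$H$ and invoke the maximality part of Proposition~\ref{3.3.3}, whereas the paper applies the proposition directly to the proper prodense subgroup and uses that its sections remain proper; both are valid.
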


\begin{proof}
By~\cite{Pervova4}, it suffices to consider the non-torsion GGS-groups~$G$. Further we may suppose that $G$ is not conjugate to a generalised Fabrykowski-Gupta group, as otherwise the result follows by~\cite[Thm.~7.2.7]{Francoeur}.
  Suppose on the contrary that $M$ is a proper prodense  subgroup of~$G$. By Proposition~\ref{3.3.3},
  for every vertex $u\in X^*$ we have $M_u$ is properly contained in
  $G_u$. However, by Propositions~\ref{proposition: second} and~\ref{proposition: first}, there exists $v\in X^*$ such that the subgroup $M_v$ is all
  of~$G$. This gives the required contradiction.
\end{proof}

The first statement of Theorem~\ref{thm:main-result} is now proved. We show the second.

\begin{proposition}
Let $G$ be a branch GGS-group acting on the $p$-regular rooted tree, for an odd prime~$p$. Then  every maximal subgroup of~$G$ is normal and of index~$p$.
\end{proposition}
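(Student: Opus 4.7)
The plan is to combine the first part of Theorem~\ref{thm:main-result} (which has just been established via Theorem~\ref{last}) with two standard structural facts about branch GGS-groups: the congruence subgroup property, and that the finite quotients by level stabilisers are $p$-groups.

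First I would note that by the first part of Theorem~\ref{thm:main-result}, every maximal subgroup $M$ of $G$ has finite index in $G$. Since $G$ is branch we have $G\neq \mathcal{G}$, so by the result from~\cite{FAGUA} recalled in Section~3, $G$ has the congruence subgroup property. Hence $M \supseteq \st_G(n)$ for some $n \in \mathbb{N}$.

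The next step is to observe that $G/\st_G(n)$ is a finite $p$-group. Indeed, the rooted generator $a$ acts as the $p$-cycle $(1\,2\,\cdots\,p)$, and the recursive description $\psi(b)=(a^{e_1},\dots,a^{e_{p-1}},b)$ shows that at every vertex the local permutation induced by any element of $G$ is a power of~$a$. Thus the image of $G$ in $\Aut T_{[n]}$ is contained in the iterated wreath product $C_p \wr \overset{n}{\cdots} \wr C_p$, which is a Sylow $p$-subgroup of $\Aut T_{[n]}$ and, in particular, a finite $p$-group. Consequently $G/\st_G(n)$ is a finite $p$-group.

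Finally, $M/\st_G(n)$ is a maximal subgroup of this finite $p$-group, and the classical fact that every maximal subgroup of a finite $p$-group is normal of index~$p$ (as it contains the Frattini subgroup, which contains the derived subgroup) immediately yields that $M/\st_G(n)$ is normal of index $p$ in $G/\st_G(n)$, hence $M$ is normal of index $p$ in $G$. The argument is essentially formal once Theorem~\ref{last} is in hand; the only real obstacle, namely ruling out maximal subgroups of infinite index, has already been overcome in the first part of the theorem.
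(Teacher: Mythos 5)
Your proposal is correct and follows essentially the same route as the paper: finite index from the first part of Theorem~\ref{thm:main-result}, the congruence subgroup property from~\cite{FAGUA} to trap $\st_G(n)$ inside $M$, and then the standard fact that maximal subgroups of the finite $p$-group $G/\st_G(n)$ are normal of index~$p$. The only difference is that you spell out why $G/\st_G(n)$ is a $p$-group (via the embedding into the iterated wreath product $C_p\wr\cdots\wr C_p$), which the paper simply asserts.
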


\begin{proof}
Let $M$ be a maximal subgroup of~$G$. From the previous result, it follows that $M$ has finite index in~$G$. By~\cite{FAGUA}, the group~$G$ has the congruence subgroup property, so there exists an $n\in\mathbb{N}$ such that $\st_G(n)\le M$. As $G/\st_G(n)$ is a finite $p$-group, it follows that $G'\le M\trianglelefteq_p G$. Hence the result.
\end{proof}

For the constant GGS-group~$\mathcal{G}$, the situation is different:

\begin{proposition}\label{pro:constant}
Let $\mathcal{G}$ be the weakly branch, but not branch, GGS-group acting on the $p$-regular rooted tree, for an odd prime~$p$. Then there are infinitely many maximal subgroups. In particular, the group~$\mathcal{G}$ has maximal subgroups that are neither normal, nor of index $p$.
\end{proposition}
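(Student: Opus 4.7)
The strategy is to produce infinitely many maximal subgroups of $\mathcal{G}$ by finding an infinite virtually nilpotent quotient of $\mathcal{G}$ that itself has maximal subgroups of unbounded finite index, and then to use the small abelianization $\mathcal{G}/\mathcal{G}' \cong C_p \times C_p$ together with Proposition~\ref{pro:virtually-nilpotent} to argue that all but finitely many of these maximal subgroups must be non-normal and of index $\ne p$.

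First, I would observe that since $\mathcal{G}$ is weakly regular branch over $K'$ but not branch, the subgroup $K'$ must have infinite index, so $Q := \mathcal{G}/K'$ is an infinite proper quotient of $\mathcal{G}$ and hence virtually nilpotent by Proposition~\ref{pro:virtually-nilpotent}. The main technical step, and the principal obstacle, is to show that $Q$ has maximal subgroups of unbounded finite index. For this, I would take a finite-index nilpotent normal subgroup $R \trianglelefteq Q$ with $F := Q/R$ finite, and pass from $R$ to the characteristic torsion-free abelian section $R/\widetilde{T}$, where $\widetilde{T} \le R$ is the preimage of the torsion subgroup of $R/R'$. Since the finitely generated nilpotent group $R$ is infinite, its abelianization is infinite as well, so setting $\bar{A} := R/\widetilde{T}$ we have $\bar{A} \cong \mathbb{Z}^r$ with $r \ge 1$, and in $P := Q/\widetilde{T}$ the subgroup $\bar{A}$ sits as a finite-index normal subgroup with quotient $F$. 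For each prime $q$ coprime to $|F|$, Schur--Zassenhaus yields $P/q\bar{A} \cong (\mathbb{F}_q)^r \rtimes F$; taking a maximal $F$-invariant proper subspace $V \le (\mathbb{F}_q)^r$ (which exists since $(\mathbb{F}_q)^r$ is a finite-dimensional $F$-module) produces a maximal subgroup $V \rtimes F$ of index $q^s$ with $1 \le s \le r$, and its preimage in $\mathcal{G}$ is a maximal subgroup of $\mathcal{G}$ of index $q^s$. Letting $q$ range over the infinitely many primes coprime to $|F|$ yields maximal subgroups of $\mathcal{G}$ of pairwise distinct indices, establishing the first claim.

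For the ``in particular'' claim, I would argue by counting and simplicity. Since $\mathcal{G}$ is finitely generated, only finitely many of its subgroups have index exactly $p$. As for normality, if $M \triangleleft \mathcal{G}$ is a normal maximal subgroup, then $\mathcal{G}/M$ is a simple group which, being a proper quotient of $\mathcal{G}$, is virtually nilpotent by Proposition~\ref{pro:virtually-nilpotent}; such a group must be finite and abelian of prime order, and since $\mathcal{G}/\mathcal{G}' \cong C_p \times C_p$ this prime is forced to be $p$. Thus every normal maximal subgroup of $\mathcal{G}$ has index $p$, and hence any of the maximal subgroups constructed above with index $q^s$ for a prime $q \ne p$ coprime to $|F|$ is automatically both non-normal and of index different from $p$.
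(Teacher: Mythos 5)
Your construction is correct, but it follows a genuinely different route from the paper. The paper works with the explicit isomorphism $\mathcal{G}/K'\cong(\mathbb{Z}/p\mathbb{Z})\ltimes\mathbb{Z}^{p-1}$ of \cite[Prop.~3.4]{FAGUA}: for each prime $q$ the sublattice $(q\mathbb{Z})^{p-1}$ is invariant, the proper subgroup $(\mathbb{Z}/p\mathbb{Z})\ltimes(q\mathbb{Z})^{p-1}$ lies in some maximal subgroup $M_q$ by finite generation, and coprimality of distinct primes forces the $M_q$ to be pairwise distinct. You instead use only the abstract facts that $Q=\mathcal{G}/K'$ is an infinite, finitely generated, virtually nilpotent proper quotient (Proposition~\ref{pro:virtually-nilpotent}), and you manufacture maximal subgroups of unbounded index through the torsion-free abelian section $\mathbb{Z}^r$, Schur--Zassenhaus, and a maximal invariant subspace; the pullback and index arguments are all sound. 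Your route is more general (it applies to any finitely generated group with an infinite virtually nilpotent proper quotient) and gives explicit indices $q^s$ with $q\ne p$, while the paper's is more concrete and bypasses the structure theory of virtually nilpotent groups. One small caveat in your first step: the inference ``weakly regular branch over $K'$ but not branch, hence $K'$ has infinite index'' silently uses the standard fact that a spherically transitive self-similar group which is weakly regular branch over a finite-index subgroup is branch; this is true but should be said (or replaced by quoting \cite[Prop.~3.4]{FAGUA}, or by the fact that $\mathcal{G}$ is not just infinite).

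The one genuinely faulty justification is in the normality argument: from ``$\mathcal{G}/M$ is simple and virtually nilpotent'' you cannot conclude that it is abelian of prime order, since every finite group is virtually nilpotent and finite nonabelian simple groups exist. The statement you need is nevertheless true for an elementary reason: because $M$ is maximal as a subgroup (not merely maximal among normal subgroups), every subgroup of $\mathcal{G}/M$ pulls back to a subgroup between $M$ and $\mathcal{G}$, so $\mathcal{G}/M$ has no nontrivial proper subgroups at all and is therefore cyclic of prime order; being abelian, it forces $\mathcal{G}'\le M$, and the prime divides $\lvert\mathcal{G}/\mathcal{G}'\rvert=p^2$, hence equals $p$. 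With that repair, your conclusion that the constructed maximal subgroups of index $q^s$, $q\ne p$, are neither normal nor of index $p$ stands, and it coincides in substance with the paper's Frattini-type argument.
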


\begin{proof}
By \cite[Prop.~3.4]{FAGUA} and using the notation of Section~3.2, we have $\mathcal{G}/K' \cong \left(\mathbb{Z}/p\mathbb{Z}\right) \ltimes \mathbb{Z}^{p-1}$ where the action of $\mathbb{Z}/p\mathbb{Z}$ on $\mathbb{Z}^{p-1}$ is given by the matrix
\[
A = 
\left( \begin{array}{@{}c|c@{}}
   \begin{matrix}
      0_{1\times p-2}
   \end{matrix} 
      & -1 \\
   \cmidrule[0.4pt]{1-2}
   I_{p-2\times p-2} & 
   \begin{matrix}
   -1\\
   \vdots\\
   -1
   \end{matrix}
\end{array} \right).
\]
We see that for all $q\in \mathbb{N}$, the subgroup $(q\mathbb{Z})^{p-1}\leq \mathbb{Z}^{p-1}$ is invariant under the action of $\mathbb{Z}/p\mathbb{Z}$. Therefore, for all $q\in \mathbb{N}$ different from $0$ or $1$, we can consider the subgroup $\left(\mathbb{Z}/p\mathbb{Z}\right) \ltimes(q\mathbb{Z})^{p-1}$, which is non-trivial and proper in $\left(\mathbb{Z}/p\mathbb{Z}\right) \ltimes \mathbb{Z}^{p-1}$. As $\left(\mathbb{Z}/p\mathbb{Z}\right) \ltimes \mathbb{Z}^{p-1}$ is finitely generated, this subgroup is contained in a maximal subgroup $M_q\leq \left(\mathbb{Z}/p\mathbb{Z}\right) \ltimes \mathbb{Z}^{p-1}$.

We notice that if $q_1, q_2\in \mathbb{N}$ are two different prime numbers, then $M_{q_1} \ne M_{q_2}$. Indeed, otherwise, we would have $(q_1\mathbb{Z})^{p-1}, (q_2\mathbb{Z})^{p-1}\leq M_{q_1}$, which would imply $\mathbb{Z}^{p-1}\leq M_{q_1}$, since $q_1$ and $q_2$ are coprime. As we already had $\left(\mathbb{Z}/p\mathbb{Z}\right) \ltimes(q_1\mathbb{Z})^{p-1}\leq M_{q_1}$, this means that $M_{q_1}=\left(\mathbb{Z}/p\mathbb{Z}\right) \ltimes \mathbb{Z}^{p-1}$, which is absurd.

We have thus shown that we have an infinite number of maximal subgroups of $\mathcal{G}/K'$ which are all pairwise distinct. By the correspondence theorem, the same is true for $\mathcal{G}$. This immediately implies that $\mathcal{G}$ has maximal subgroups that are not of index $p$, since there can only be finitely many such subgroups. It also implies that $\mathcal{G}$ admits maximal subgroups that are not normal. Indeed, otherwise, the Frattini subgroup of $\mathcal{G}$ would contain $\mathcal{G}'$, since the quotient of any group by a normal maximal subgroup must be a cyclic group of prime order, and thus abelian, but $\mathcal{G}/\mathcal{G'}$ is a $p$-group, which would imply that every maximal subgroup is of index~$p$.
\end{proof}


\end{document}